\theoremstyle{plain}
\newtheorem*{theorem*}{Theorem}
\newtheorem{theorem}{Theorem}[section]
\newtheorem{claim}{Claim}[section]
\newtheorem{definition}{Definition}[section]
\newtheorem{lemma}[theorem]{Lemma}
\newtheorem{proposition}[theorem]{Proposition}
\newtheorem{corollary}[theorem]{Corollary}
\newtheorem*{question*}{Question}
\newtheorem*{questions*}{Questions}
\newtheorem*{remark*}{Remark}
\def\NN{\mathbb{N}}
\def\ZZ{\mathbb{Z}}
\def\ag{\mathcal{A}}
\def\CC{\mathcal{C}}
\def\FF{\mathcal{F}}
\def\dens{\operatorname{dens}}
\newcommand{\define}[1]{\emph{#1}}
\newcommand{\cor}[2][]{#2}
\definecolor{vert}{RGB}{0,178,102}
\title{Realization of aperiodic subshifts and uniform densities in groups}
\date{}
\author{Nathalie Aubrun\thanks{LIP, ENS de Lyon -- CNRS -- INRIA -- UCBL -- Universit\'e de Lyon}~, Sebasti\'an Barbieri\footnotemark[1]\\ and St\'ephan Thomass\'e\footnotemark[1]~\thanks{Institut Universitaire de France} \\
	\texttt{nathalie.aubrun@ens-lyon.fr}\\ \texttt{sebastian.barbieri@ens-lyon.fr}\\ \texttt{stephan.thomasse@ens-lyon.fr}}
\begin{document}
	
	\maketitle 

	\begin{abstract}
		A theorem of Gao, Jackson and Seward, originally conjectured to be false by Glasner and Uspenskij, asserts that every countable group admits a 2-coloring. A direct consequence of this result is that every countable group has a strongly aperiodic subshift on the alphabet $\{0,1\}$. In this article, we use Lov\'asz local lemma to first give a new simple proof of said theorem, and second to prove the existence of a $G$-effectively closed strongly aperiodic subshift for any finitely generated group $G$. We also study the problem of constructing subshifts which generalize a property of Sturmian sequences to finitely generated groups. More precisely, a subshift over the alphabet $\{0,1\}$ has uniform density $\alpha \in [0,1]$ if for every configuration the density of 1's in any increasing sequence of balls converges to $\alpha$. We show a slightly more general result which implies that these subshifts always exist in the case of groups of subexponential growth.
	\end{abstract}	
	
	\textbf{Keywords:} Symbolic dynamics, Countable groups, Amenable groups, Sturmian sequences, Aperiodic subshift.
	
	\section*{Introduction}
	\label{section.introduction}
	
	Symbolic dynamics is concerned with the study of subshifts on groups. Subshifts are sets of colorings of a group $G$ by some finite alphabet $\ag$ that respect local constraints given by forbidden patterns, or equivalently, subsets of $\ag^G$ that are both closed for the product topology and shift-invariant. They can be used to model dynamical systems~\cite{MorseHedlund1938,pollicott1998dynamical}, but can also be seen as computational models~\cite{Hochman2009b}. Subshifts of finite type (SFT) -- those which can be defined by forbidding a finite set of patterns -- constitute an interesting class of subshifts since they are defined by local conditions, and can model real-world phenomena. Classical symbolic dynamics has focused on the one-dimensional case $G=\ZZ$~\cite{MorseHedlund1938,lind1995introduction} and more recently $G=\ZZ^d$ with $d\geq2$~\cite{Hochman2009b,PavlovSchraudner2009}. Subshifts on free groups have also been studied~\cite{Piantadosi2006,Piantadosi2008}. Very recent results tackle 
computational aspects of subshifts on finitely generated groups~\cite{ABS2014, BallierStein2013,Cohen,CarrollPenland,Jeandel2015}.
	
	\bigskip
	
	In this article we consider two general aspects of realizability which concern subshifts in groups: The first one asks if it is possible to construct a strongly aperiodic subshift, that is, one such that the stabilizer of every element is the trivial subgroup. The second aspect is inspired by Sturmian words, by the fact that the factors of length $n$ carry a density of 1's which converges to the slope of the irrational rotation which generates the word. Here, given a finitely generated group and its word metric given by a set of generators, we ask if \cor[it's]{it is} possible to construct a subshift $X \subset \{0,1\}^G$ such that for every configuration $x \in X$ and every sequence $(g_n)_{n \in \NN}$ of group elements, the density of 1's over the balls $B(g_n,n)$ always converges to a fixed density $\alpha \in [0,1].$ We call this property \emph{uniform density}.
	
	The existence of a countable group which does not admit a non-empty strongly aperiodic subshift over the alphabet $\{0,1\}$ was asked in~\cite{glasner2009} and subsequently answered negatively in~\cite{gao2009}. Nevertheless, their proof is very technical. In this article we combine the asymmetric version of Lov\'asz local lemma~\cite{AlonSpencer2008} and compactness of the set of configurations to get a nice tool to prove non-emptiness of subshifts defined by forbidden patterns. This technique, which in some sense is the analogue of the probabilistic method in graph theory, provides very short proofs of the existence of configurations in subshifts. We use it to prove again in a succinct way the existence of a strongly aperiodic subshift on any countable group. We also extend the previous result by showing that in finitely generated groups it is also possible to construct non-empty strongly aperiodic subshifts which satisfy the condition of being $G$-effectively closed, that is, that they can be defined as 
the complement of a recursively enumerable union of cylinders by a Turing machine which has access to the word problem of $G$. That is, we show:

	{
		\renewcommand{\thetheorem}{\ref{theorem.free_subflow}}
		\begin{theorem}
		Every countable group $G$ has a non-empty, strongly aperiodic subshift on the alphabet $\{0,1\}$.
		\end{theorem}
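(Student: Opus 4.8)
The plan is to use the Lovász local lemma (LLL) approach announced in the introduction. Fix a countable group $G$ and enumerate its nontrivial elements. I want to find a configuration $x \in \{0,1\}^G$ whose stabilizer is trivial, i.e., for every $g \neq 1_G$ there exists some $h \in G$ with $x(hg) \neq x(h)$. The natural probabilistic model is to choose $x(h)$ independently and uniformly in $\{0,1\}$ for each $h \in G$. For a fixed nontrivial $g$, the ``bad event'' $A_g$ that $g$ stabilizes $x$ (i.e. $x(hg) = x(h)$ for all $h$) has probability zero, so a direct application is degenerate; the standard fix is to work with finite witnesses. For each $g \neq 1_G$ and each finite subset $T \subseteq G$, consider the event that $x(hg) = x(h)$ for all $h \in T$. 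One then sets up bad events of positive but controllable probability whose avoidance, over a suitable family, guarantees that no nontrivial $g$ can stabilize $x$.

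\smallskip

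The key steps I would carry out, in order, are the following. First I would reduce strong aperiodicity to a countable collection of finitary forbidden-pattern conditions: $x$ has trivial stabilizer if and only if for each nontrivial $g$ there is a finite region on which $x$ and its $g$-shift disagree. Second, I would fix, for each nontrivial $g$, a finite set $D_g$ of translates on which to test disagreement, chosen large enough that the bad event $A_g = \{x : x(hg)=x(h) \text{ for all } h \in D_g\}$ has small probability $p_g$ (roughly $2^{-|D_g|/2}$, since the constraints split into independent pairs along $g$-orbits). Third, I would establish the dependency structure: $A_g$ and $A_{g'}$ are independent unless the coordinate sets they examine overlap, so each $A_g$ depends on only finitely many others, with the number of neighbours controlled by how the test regions $D_g$ intersect. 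Fourth, I would verify the asymmetric LLL inequality $p_g \le x_g \prod_{g' \sim g}(1-x_{g'})$ for a suitable choice of weights $x_g \in (0,1)$, which is possible by taking the $D_g$ to grow fast enough with $g$ so that $p_g$ decays faster than the neighbourhood sizes grow. Finally, since LLL gives that all bad events are simultaneously avoidable with positive probability on every finite subfamily, a compactness argument on the compact space $\{0,1\}^G$ produces an actual configuration avoiding all of them, which is the desired point of a nonempty strongly aperiodic subshift.

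\smallskip

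The main obstacle I expect is the quantitative bookkeeping in steps three and four: one must choose the test regions $D_g$ so that simultaneously (i) the bad-event probabilities $p_g$ are summably small, and (ii) each bad event has a bounded-enough dependency neighbourhood for the LLL weights to satisfy the product inequality. The delicate point is that in an arbitrary countable group the combinatorics of which translation events $A_{g'}$ can share coordinates with $A_g$ is not uniform, so the $D_g$ must be selected adaptively along the enumeration of $G$ to keep the dependency degree under control. I would handle this by defining the $D_g$ recursively, at each stage choosing $D_g$ disjoint enough from the finitely many previously chosen regions and large enough to beat the accumulated neighbourhood count, so that the asymmetric LLL condition holds term by term. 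The passage from ``every finite subfamily is avoidable'' to ``all are simultaneously avoidable'' is then routine by compactness, as it only requires that the relevant subshift be closed, which it is by construction as a complement of cylinders.
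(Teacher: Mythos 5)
There is a genuine gap, and it is not in the quantitative bookkeeping you worry about but in the property you are targeting. You set up one bad event $A_g$ per nontrivial $g$, requiring that $x$ and its $g$-translate disagree somewhere on a single finite test set $D_g$. Avoiding all of these produces a configuration $x$ with trivial stabilizer, but that is strictly weaker than what the theorem asks for: a strongly aperiodic \emph{subshift} must have every configuration with trivial stabilizer, and the orbit closure of a single aperiodic point need not be strongly aperiodic. (Already in $\ZZ$: a configuration with a single $1$ is aperiodic, yet its orbit closure contains the all-zero fixed point; more generally, the disagreement you planted on $D_g$ can escape to infinity along a sequence $\sigma_{g_k}(x)$ and vanish in the limit.) Relatedly, the set of configurations avoiding your events is closed but not shift-invariant, so it is not a subshift; and once you pass to the orbit closure to fix that, the argument no longer controls the stabilizers of the limit points.

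The missing idea is that the disagreement witness must be uniform over all translates. The paper proves that $x$ has the \emph{distinct neighborhood property} (a $2$-coloring in the sense of Gao--Jackson--Seward): for each nontrivial $s_n$ there is one finite set $T_n$, with $T_n \cap s_nT_n = \emptyset$, such that $x|_{gT_n} \neq x|_{gs_nT_n}$ for \emph{every} $g \in G$ simultaneously. This uniformity is exactly what survives passage to the orbit closure and yields strong aperiodicity of $\overline{orb_\sigma(x)}$. It forces the bad-event family to be indexed by all pairs $(n,g) \in \NN \times G$, with $\mu(A_{n,g}) = 2^{-|T_n|}$, and this is where the local lemma does real work: the events at different $g$ genuinely overlap, and the verification reduces to counting (at most $4C^2nm$ events $A_{m,g'}$ depending on $A_{n,g}$, with $|T_n| = Cn$ and weights $2^{-Cn/2}$, satisfied for $C = 17$). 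Your adaptive scheme of choosing the $D_g$ nearly disjoint from one another would make your finitely many events almost independent and the LLL nearly unnecessary --- a sign that the construction is proving too little. Your probabilistic machinery (asymmetric LLL on finite subfamilies plus compactness of $\{0,1\}^G$) is the right one and matches the paper's Lemma on non-emptiness via the local lemma; the repair is to replace ``$x$ has trivial stabilizer'' by the distinct neighborhood property and to add the short argument that this property is inherited, in the form of strong aperiodicity, by every configuration in the orbit closure.
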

		\addtocounter{theorem}{-1}
	}

	{
		\renewcommand{\thetheorem}{\ref{theorem.strongly_aperiodic1}}
		\begin{theorem}
			Every finitely generated group $G$ has a non-empty $G$-effectively closed strongly aperiodic subshift.
		\end{theorem}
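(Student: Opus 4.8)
The plan is to upgrade the proof of Theorem~\ref{theorem.free_subflow} so that the construction becomes $G$-effectively closed. Recall that a configuration $x \in \{0,1\}^G$ is strongly aperiodic precisely when, for every nontrivial $g \in G$, the shifted configuration $g \cdot x$ differs from $x$; equivalently, for each $g \neq 1_G$ there exists some $h \in G$ with $x(hg) \neq x(h)$. The idea is to forbid, for each nontrivial $g$, a family of patterns that guarantees this condition is witnessed, but to do so using only patterns whose support can be described effectively in terms of the word metric of $G$. Concretely, for each $g \in G$ and each sufficiently large radius $r$ (growing with $|g|$), I would forbid every pattern $p$ supported on a ball $B(1_G,r)$ which is \emph{$g$-periodic}, meaning $p(hg) = p(h)$ whenever both $h$ and $hg$ lie in the support. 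Forbidding all such patterns over all $g$ and all admissible $r$ defines a subshift $X$ whose every configuration is strongly aperiodic.

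The first key step is to verify that the set of forbidden patterns I have described is recursively enumerable relative to the word problem of $G$. A Turing machine with oracle access to the word problem can enumerate the group elements, compute word lengths, decide when two products $hg$ and $h$ coincide, and thus decide whether a given candidate pattern on a finite ball is $g$-periodic. This is exactly the requirement for $X$ to be $G$-effectively closed: the complement of $X$ is a recursively enumerable union of cylinders, where enumeration is carried out by a machine equipped with the word problem oracle. Since the patterns are indexed by pairs $(g,r)$ with $r$ bounded by an effective function of $|g|$, the enumeration is genuinely algorithmic once the oracle resolves equalities in $G$.

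The second, and harder, step is to prove that $X$ is non-empty, and here I would reuse the Lov\'asz local lemma machinery already established in the proof of Theorem~\ref{theorem.free_subflow}. The strategy is to consider the uniform product measure on $\{0,1\}^G$ and, for each nontrivial $g$ and each relevant ball, define a bad event expressing that the local condition fails to be witnessed there. The probability of each bad event decays exponentially in the radius $r$, since a $g$-periodic pattern on a ball forces many coordinates to agree. The dependency structure is local: two bad events interact only when their supporting balls overlap, and in a finitely generated group the number of balls of a given radius meeting a fixed ball is controlled by the growth function. The main obstacle is balancing these two quantities so that the asymmetric local lemma criterion is satisfied uniformly over all $g$ and all radii simultaneously. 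This requires choosing the radius $r = r(|g|)$ to grow fast enough that the exponential decay of the bad-event probabilities dominates the polynomial-or-worse growth of the overlap count, while still keeping $r(|g|)$ a computable function so that effectiveness is preserved. Once the local lemma condition is checked, a compactness argument over finite subsets of $G$, exactly as in the earlier proof, yields a global configuration avoiding all forbidden patterns, establishing that $X$ is non-empty and completing the proof.
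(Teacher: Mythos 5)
Your effectiveness argument is fine, but the non-emptiness step has a genuine gap: the Lov\'asz local lemma condition cannot be satisfied for the family of events you propose, at least in groups of exponential growth such as free groups, and no choice of the function $r(|g|)$ repairs it. Fix a nontrivial $g$ with $|g|=m$ and let $A_{g,k}$ be the event that $x|_{kB(1_G,r)}$ is $g$-periodic. Its probability is $2^{-c}$, where $c$ is at most the size of the overlap $B(1_G,r)\cap g^{-1}B(1_G,r)$ (one constraint per element of the overlap, minus the number of orbits), and in a free group this overlap is only about a $(2|S|-1)^{-m/2}$-fraction of $|B(1_G,r)|$; so $\mu(A_{g,k})\geq 2^{-c_m|B(1_G,r)|}$ with $c_m\to 0$ as $m\to\infty$, uniformly in $r$. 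On the other hand, $A_{g,k}$ is not independent of the at least $|B(1_G,r)|$ events $A_{g_0,k'}$ attached to a shortest nontrivial element $g_0$, and the lemma's condition applied to those events already forces $x(A_{g_0,k'})\geq\mu(A_{g_0,k'})=:p_0>0$, a constant. Hence $\prod_{B\in\Gamma(A_{g,k})}(1-x(B))\leq(1-p_0)^{|B(1_G,r)|}$, and the condition for $A_{g,k}$ requires $2^{-c_m|B(1_G,r)|}\leq(1-p_0)^{|B(1_G,r)|}$, i.e. $c_m\geq-\log_2(1-p_0)$, a fixed positive constant. This fails for all sufficiently long $g$, and enlarging $r$ is useless because both exponents scale linearly in $|B(1_G,r)|$. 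The point, already visible in the proof of Theorem~\ref{theorem.free_subflow}, is that the witness supports must be kept small: there the sets $T_n\cup s_nT_n$ have size linear in the enumeration index $n$, so an event of index $n$ meets only $O(nm)$ events of index $m$; taking balls as supports inflates the dependency degree to a ball volume while the probability gain degrades with $|g|$.

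The paper's proof takes a different route that avoids this tension. Over an alphabet of size $2^{19}|S|^2$ it forbids \emph{vertex-square paths}, i.e. odd-length paths $v_1\dots v_{2n}$ in the Cayley graph with $x_{v_i}=x_{v_{i+n}}$. The bad events are indexed by paths: a path of length $2n-1$ is a square with probability $|\ag|^{-n}$ and meets at most $4nj(2|S|)^{2j}$ paths of length $2j-1$, and the large alphabet makes the asymmetric local lemma close. Strong aperiodicity is then a separate algebraic argument: writing a putative period as $g=uwu^{-1}$ with $|w|$ minimal and using that $stab_\sigma(x)$ is normal, one extracts a square along an explicit path of length $2|w|-1$. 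You would need both missing ingredients: supports that are paths rather than balls, and the reduction from an arbitrary period to a square on a short path. One further slip: with the paper's convention $(\sigma_g x)_h=x_{g^{-1}h}$, membership $g\in stab_\sigma(x)$ means $x_{gh}=x_h$ for all $h$, so your periodicity condition should use left multiplication; the right-multiplication condition $x(hg)=x(h)$ does not detect the stabilizer in non-abelian groups.
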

		\addtocounter{theorem}{-1}
	}

	A bi-infinite sequence of $0$'s and $1$'s is balanced if for every $n\in\NN$, every factor of size $n$ can have only two possible quantities of symbols $1$. Most famous examples of balanced sequences are Sturmian sequences, which are both balanced and aperiodic, or equivalently, codifications of irrational rotations in the circle~\cite{Lothaire,fogg2002substitutions}. Some attempts have been made to generalize Sturmian sequences to dimension~2~\cite{BertheVuillon2000,Fernique2006}. In general groups a configuration such that the amount of $1$'s over any finite connected support of size $n$ has at most two values is not possible as the group's geometry is too wild. Instead, we propose this uniform density property of a subshift \cor{mentioned above}{} as a generalization of the aforementioned property of Sturmian subshifts. In this direction, we are able to prove a more technical result which implies the existence of non-empty subshifts with uniform density in every group of subexponential growth. Formally, 
we show the following result\cor[ which makes sense in amenable groups]{}.
	{
		\renewcommand{\thetheorem}{\ref{theorem.densities}}
		\begin{theorem}
			Let $G$ be an infinite and finitely generated \cor{amenable} group and $\alpha \in [0,1]$. \cor[There is a non-empty subshift $X_{\alpha} \subset \{0,1\}^G$ such that for any $x \in X_{\alpha}$ and F\o lner sequence $(F_n)_{n \in \NN}$ then $\lim_{n \to \infty} \dens(1,F_n,x) = \alpha$]{Then there exists a non-empty subshift $X_{\alpha} \subset \{0,1\}^G$ with $\lim_{n \to \infty} \dens(1,F_n,x) = \alpha$ for any $x \in X_{\alpha}$ and any F\o lner sequence $(F_n)_{n \in \NN}$}.
		\end{theorem}
		\addtocounter{theorem}{-1}
	}
	
	That is, by replacing the sequence of intervals of $\ZZ$ for a general F\o lner sequence we produce a subshift where the densities of 1's converge to a fixed value $\alpha$. We show that the subshifts given by our construction are weakly aperiodic if $\alpha \notin \mathbb{Q}$. Our example loses some of the properties of those constructions, like balancedness, but instead uses a two-symbol alphabet and is available on any infinite, finitely generated and amenable group. 

	\section{Preliminaries}
	\label{section.preliminaries}
	
	Throughout this article the groups $G$ considered will \cor[be either countable or finitely generated]{always be countable}; we denote their identity element $1_G$. When $G$ is finitely generated we associate a finite set $S\subset G$ of generators and the undirected right Cayley graph $\Gamma(G,S) = (G, \{ \{g,gs\} \mid g \in G, s\in S\})$ so that $(G,d)$ is a metric space where $d$ is the distance induced on $G$ by $\Gamma(G,S)$. \cor{If two words $w_1,w_2$ in $S^*$ represent the same element in $G$, we write $w_1 =_G w_2$.} We shall denote by $B(g,n) = \{h \in G \mid d(g,h) \leq n\}$ the ball of size $n$ centered in $g \in G$. In general we denote $B_{\Gamma}(v,n)$ the ball of size $n$ centered in $v$ of an arbitrary graph $\Gamma$. \cor{For $g\in G$, we denote by $|g|$ the length of a shortest path from $1_G$ to $g$ in $\Gamma(G,S)$, that is to say $|g|=d(1_G,g)$.} We also denote by $\texttt{WP}(G) := \{ w \in (S\cup S^{-1})^{*} \mid w =_G 1_G \}$ the set of words which can be written using elements 
from $S$ and their inverses which are equal to $1_G$ in the group $G$. If $\texttt{WP}(G)$ is a decidable language we say that $G$ has decidable word problem. For more references see~\cite{MagnusKarassSolitar2004}.

	We now give some basic definitions of symbolic dynamics. For a more complete introduction the reader may refer to~\cite{lind1995introduction,ceccherini-SilbersteinC09}. Let $\ag$ be a finite alphabet and $G$ a countable group. The set $\ag^G = \{ x: G \to \ag\}$ equipped with the left group action $\sigma: G \times \ag^G \to \ag^G$ defined by $(\sigma_g(x))_h = x_{g^{-1}h}$ is the \textit{$G$-fullshift}. The elements $a \in \ag$ and $x \in \ag^G$ are called \define{symbols} and \define{configurations} respectively. By taking the discrete topology on $\ag$ we obtain that the set of configurations $\ag^G$ is compact and metrizable. In the case of a countable group, given an enumeration $1_G = g_0,g_1,\dots$ of $G$, the topology is generated by the metric $d(x,y) = 2^{-\inf (\{ n \in \NN \mid\ x_{g_n} \neq y_{g_n}  \})}$. If $E$ is a subset of $\ag^G$, we denote by~$\overline{E}$ its topological closure. In the case of a finitely generated group another possibility which is more practical is $\
\displaystyle{d(x,y) = 2^{-\inf\{|g|\; \mid\; g \in G:\; x_g \neq y_g\}}}$. This topology is generated by a clopen basis given by the \define{cylinders} $[a]_g = \{x \in \ag^G | x_g = a\in \ag\}$. A \emph{support} is a finite subset $F \subset G$. Given a support $F$, a \emph{pattern with support $F$} is an element $p$ of $\ag^F$, i.e. a finite configuration and we write $supp(p) = F$. We also denote the cylinder generated by $p$ centered in $g$ as $[p]_g = \bigcap_{h \in F}[p_h]_{gh}$
	One says that a pattern $p\in \ag^{F}$ \emph{appears} in a configuration $x \in \ag^G$ if there exists $g \in G$ such that for any $h \in F$, $x_{gh} = p_h$, said otherwise, if there exists $g$ such that $x \in [p]_g$. In this case we write $p \sqsubset x$. We denote the set of finite patterns over $G$ as $\ag_G^* := \bigcup_{F \subset G, |F| < \infty}{\ag^F}$.
	\begin{definition}
		A subset $X$ of $\ag^G$ is a \define{$G$-subshift} if it is $\sigma$-invariant -- $\sigma(X)\subset X$ -- and closed for the cylinder topology. Equivalently, $X$ is a $G$-subshift if and only if there exists a set of forbidden patterns $\FF \subset \ag_G^*$ that defines it.
		$$X=X_\FF := \left\{\ x\in \ag^G\mid \forall p \in \FF, p \not\sqsubset x \right\} = \bigcap_{p \in \FF, g \in G}{\ag^G \setminus [p]_g}.$$
	\end{definition}
	
	That is, a $G$-subshift is a shift-invariant subset of $\ag^G$ which can be written as the complement of a union of cylinders.
	If \cor[the context is clear enough]{it is clear from the context}, we will drop the $G$ and simply refer to a subshift. A subshift $X\subseteq \ag^G$ is \define{of finite type} -- $G$-SFT for short -- if there exists a finite set of forbidden patterns $\FF$ such that $X=X_\FF$. 
	
	Consider a group which is generated by a finite set $S$. A \define{pattern coding} $c$ is a finite set of tuples $c=(w_i,a_i)_{1 \leq i \leq n}$ where $w_i \in (S \cup S^{-1})^{*}$ and $a_i \in \ag$. We say that a pattern coding is \define{consistent} if for every pair of tuples such that $w_i =_G w_j$ ($w_i$ and $w_j$ represent the same element under $G$) then $a_i = a_j$. We say a consistent pattern coding $c$ \define{codifies} a pattern $P$ if every $w_i$ represents an element of $supp(P)$ and for every $g \in supp(P)$ there exists a tuple $(w_i,a_i) \in c$ such that $g =_G w_i$ and $P_g = a_i$.
	
	\begin{definition}
		For a finitely generated group $G$ we say a subshift $X\subseteq \ag^G$ is \define{$G$-effectively closed} if there exists a Turing machine with oracle $\texttt{WP}(G)$ which recognizes a set of pattern codings such that the consistent ones codify a set of patterns $\FF$ such that $X=X_\FF$. If the same property is valid without the oracle we say $X$ is \define{effectively closed}.
	\end{definition}
	
	Being $G$-effectively closed is a generalization of the same concept for $\ZZ$-subshifts where the set of forbidden patterns is a recursively enumerable set of words. 
	
	
	Let $x\in \ag^G$ be a configuration. The \define{orbit} of $x$ is the set of configurations $orb_\sigma(x)=\left\{\sigma_g(x)\mid g\in G \right\}$, and the \define{stabilizer} of $x$ is the set of group elements $stab_\sigma(x)=\left\{\ g\in G\mid \sigma_g(x)=x\right\}$. \cor[In the context of subshifts, the stabilizer is a normal subgroup of $G$.]{}
	
	\begin{definition}
		A $G$-subshift $X\subseteq \ag^G$ is \define{weakly aperiodic} if for every configuration $x\in X$, $|orb_\sigma(x)|=\infty$. A $G$-subshift $X\subseteq \ag^G$ is \define{strongly aperiodic} if for every configuration $x\in X$, $stab_\sigma(x)=\left\{ 1_G\right\}$.
	\end{definition}
	
	For infinite groups the weak concept of aperiodicity is relevant and implied by strong aperiodicity.
	
	
	\section{Non-empty strongly aperiodic subshifts}
	\label{section.strongly_aperiodic_subshifts}
	
	In this section we construct non-empty strongly aperiodic subshifts on any countable group. \cor{As mentionned in the introduction,} \cor[T]{t}he question of the existence of an infinite countable group $G$ that does not admit a non-empty strongly aperiodic subshift over the alphabet $\{0,1\}$ was asked in~\cite{glasner2009} and subsequently answered in the negative in~\cite{gao2009}. \cor[The proof presented in the latter article is a technical construction.]{} In this section we present a short proof based on Lov\'asz local lemma~\cite{AlonSpencer2008}. \cor[We]{For finitely generated groups, we also} give a second proof -- inspired by the use of local lemma in~\cite{alonetal_nonrepetitivecoloringofgraphs} -- which is quite easy to visualize and gives a $G$-effectively closed subshift, but this proof uses a large alphabet and only works in finitely generated groups.
	
	We begin by introducing the asymmetric version of the local lemma. We then extract a corollary to show how it can be used in order to produce configurations in subshifts by using the compactness of the set of configurations and then we proceed to the construction of the strongly aperiodic subshifts.
	
	\subsection{Lov\'asz local lemma}
	\label{subsection.LLL}

	\begin{lemma}\label{lovaszlocallemma}[Asymmetric Lov\'asz local lemma]
		Let $\mathscr{A} := \{A_1,A_2,\dots,A_n\}$ be a finite collection of measurable sets in a probability space $(X,\mu, \mathcal{B})$. For $A \in \mathscr{A}$, let $\Gamma(A)$ be the \cor{smallest} subset of $\mathscr{A}$ such that $A$ is independent of the collection $\mathscr{A} \setminus (\{A\} \cup \Gamma(A)).$ Suppose there exists a function $x: \mathscr{A} \to (0,1)$ such that: $$\forall A \in \mathscr{A}: \mu(A) \leq x(A)\prod_{B \in \Gamma(A)}(1-x(B))$$ then the probability of avoiding all events in $\mathscr{A}$ is positive\cor[, in particular:]{. Specifically} $$ \mu\left(  X \setminus \bigcup_{i = 1}^{n} {A_i}\right) \geq \prod_{A \in \mathscr{A}}(1-x(A)) > 0.$$
	\end{lemma}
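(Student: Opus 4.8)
The plan is to reduce the entire statement to a single inductive estimate on conditional probabilities, which is the standard route to the Lovász local lemma. Writing $\overline{A}$ for the complement of an event $A$, I would first establish, by induction on $|S|$, the claim that for every $A_i \in \mathscr{A}$ and every index set $S \subseteq \{1,\dots,n\} \setminus \{i\}$ one has
$$\mu\Bigl(A_i \;\Big|\; \bigcap_{j \in S}\overline{A_j}\Bigr) \leq x(A_i).$$
The base case $S = \emptyset$ is immediate from the hypothesis: since each factor $1 - x(B)$ lies in $(0,1)$, we get $\mu(A_i) \leq x(A_i)\prod_{B \in \Gamma(A_i)}(1-x(B)) \leq x(A_i)$.

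For the inductive step I would split $S$ into the part $S_1$ indexing the events that lie in the dependency set $\Gamma(A_i)$ and the complementary part $S_2$ indexing the events outside it. Expanding the conditional probability as a quotient
$$\mu\Bigl(A_i \;\Big|\; \textstyle\bigcap_{j\in S}\overline{A_j}\Bigr) = \frac{\mu\bigl(A_i \cap \bigcap_{j\in S_1}\overline{A_j} \mid \bigcap_{k\in S_2}\overline{A_k}\bigr)}{\mu\bigl(\bigcap_{j\in S_1}\overline{A_j} \mid \bigcap_{k\in S_2}\overline{A_k}\bigr)},$$
I would bound numerator and denominator separately. The numerator is at most $\mu\bigl(A_i \mid \bigcap_{k\in S_2}\overline{A_k}\bigr)$, which equals $\mu(A_i)$ because $A_i$ is independent of the events indexed by $S_2$ (none of them lies in $\Gamma(A_i)$); by hypothesis this is at most $x(A_i)\prod_{B\in\Gamma(A_i)}(1-x(B))$. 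For the denominator I would write the intersection over $S_1$ as a telescoping product of conditional probabilities and apply the induction hypothesis to each factor — every such factor conditions on a strictly smaller set — obtaining a lower bound $\prod_{j\in S_1}(1 - x(A_j)) \geq \prod_{B\in\Gamma(A_i)}(1-x(B))$, the last inequality holding because $S_1$ indexes a subset of $\Gamma(A_i)$ and all factors are at most $1$. Dividing, the product over $\Gamma(A_i)$ cancels and leaves exactly $x(A_i)$, closing the induction.

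With the claim in hand, the conclusion follows by the chain rule applied to an arbitrary ordering of the events:
$$\mu\Bigl(X \setminus \bigcup_{i=1}^n A_i\Bigr) = \mu\Bigl(\bigcap_{i=1}^n \overline{A_i}\Bigr) = \prod_{i=1}^n \mu\Bigl(\overline{A_i} \;\Big|\; \bigcap_{j<i}\overline{A_j}\Bigr) \geq \prod_{i=1}^n \bigl(1 - x(A_i)\bigr) > 0,$$
where each factor is bounded below using the claim to control $\mu\bigl(A_i \mid \bigcap_{j<i}\overline{A_j}\bigr) \leq x(A_i)$, and strict positivity is guaranteed since each $x(A_i) \in (0,1)$.

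I expect the main obstacle to be the inductive step, specifically the separation of $S$ into neighbors and non-neighbors of $A_i$: the numerator bound crucially uses the \emph{independence} of $A_i$ from the non-neighbor events to drop the conditioning down to the unconditional $\mu(A_i)$, whereas the denominator bound crucially uses the induction hypothesis on strictly smaller conditioning sets. Arranging for the same product $\prod_{B \in \Gamma(A_i)}(1-x(B))$ to appear in both the numerator upper bound and the denominator lower bound, so that it cancels exactly, is the delicate bookkeeping that makes the argument close; everything else is formal manipulation of conditional probabilities.
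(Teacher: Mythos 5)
Your proposal is the standard textbook proof of the asymmetric local lemma (the one in Alon--Spencer, which is exactly the reference the paper cites for this statement without reproducing a proof), and it is correct: the induction on $|S|$, the split of $S$ into neighbours $S_1$ and non-neighbours $S_2$, the use of independence to drop the conditioning in the numerator, the telescoping lower bound on the denominator, and the final chain-rule computation all go through as you describe. The only point worth making explicit is that the conditional probabilities must be well defined, i.e.\ $\mu\bigl(\bigcap_{j\in S}\overline{A_j}\bigr)>0$ throughout the induction; this is usually handled by proving simultaneously (or observing inductively via the same chain-rule factorization) that $\mu\bigl(\bigcap_{j\in S}\overline{A_j}\bigr)\geq\prod_{j\in S}(1-x(A_j))>0$, a one-line addition to your argument.
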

	
	The sets $A_1,A_2,\dots,A_n$ can be seen as bad events that we want to avoid. In the context of the present article where $\ag$ is a finite alphabet and $G$ a countable group, we will choose the probability space to be $X=\ag^G$ with $\mu$ any Bernoulli probability measure on $\ag^G$. Suppose $X$ is a subshift defined by a set of forbidden patterns $\FF= \bigcup_{n \geq 1}\FF_n$ where $\FF_n \subset \ag^{S_n}$ is a finite set of patterns with \cor[a]{} finite support $S_n$. We will consider the bad events $A_{n,g}= \bigcup_{p \in \FF_n}[p]_g = \left\{ x\in\ag^G: x|_{gS_n} \in \FF_n \right\}$, that is to say one of the forbidden patterns $p \in \FF_n$ appears in position $g$. Subshifts might be defined by an infinite amount of forbidden patterns while the lemma only holds for a finite collection of bad events. Nevertheless the compactness of $\ag^G$ allows us to use the lemma anyway, as explained in what follows.
	
	\begin{lemma}
		\label{corollary.LLL_nonemptiness} 
		Let $G$ a countable group and $X\subset \ag^G$ a subshift defined by the set of forbidden patterns $\FF= \bigcup_{n \geq 1}\FF_n$, where $\FF_n \subset \ag^{S_n}$. \cor{Let $\mu$ be a Bernoulli probability measure on $\ag^G$.} Suppose that there exists a function $x: \NN\times G \to (0,1)$ such that:
		\begin{equation}\label{eq.condition_LLL}\tag{$*$}
		\forall n\in\NN, g\in G,\; \mu(A_{n,g})\leq x(n,g)\prod_{\substack{gS_n\cap hS_k\neq\emptyset\\ (k,h) \neq (n,g)}}(1-x(k,h)),
		\end{equation}
		where $A_{n,g}=\left\{ x\in\ag^G: x|_{gS_n} \in \FF_n \right\}$ \cor[and $\mu$ is any Bernoulli probability measure on $\ag^G$]{}. Then the subshift $X$ is non-empty. 
	\end{lemma}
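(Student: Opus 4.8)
The plan is to reduce the infinite collection of bad events to a finite one so that Lemma~\ref{lovaszlocallemma} applies directly, and then to pass to the limit using compactness of $\ag^G$. First I would enumerate the group as $1_G = g_0, g_1, g_2, \dots$ and, for each $N \in \NN$, consider the finite subcollection of bad events
\begin{equation*}
\mathscr{A}_N := \left\{ A_{n,g} \;\middle|\; n \leq N,\; gS_n \subseteq \{g_0, \dots, g_N\} \right\}.
\end{equation*}
The key observation is that the dependency structure among the $A_{n,g}$ is \emph{geometric}: since $\mu$ is a Bernoulli measure, the event $A_{n,g}$ depends only on the coordinates in $gS_n$, so $A_{n,g}$ is independent of any collection of events whose supports are disjoint from $gS_n$. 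Hence I may take $\Gamma(A_{n,g})$ to be exactly the set of events $A_{k,h}$ in the current finite collection with $(k,h) \neq (n,g)$ and $hS_k \cap gS_n \neq \emptyset$. This is precisely the index set appearing in the product in~\eqref{eq.condition_LLL}.

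Next I would verify the hypothesis of the asymmetric local lemma for each finite collection $\mathscr{A}_N$. Restricting the global function $x$ to the relevant indices and using that the product in~\eqref{eq.condition_LLL} ranges over a \emph{superset} of $\Gamma(A_{n,g})$ within $\mathscr{A}_N$ (each factor lies in $(0,1)$, so enlarging the product only decreases its value), the assumed inequality for all of $G$ implies the corresponding inequality for the truncated system. Lemma~\ref{lovaszlocallemma} then yields that
\begin{equation*}
\mu\left( \ag^G \setminus \bigcup_{A \in \mathscr{A}_N} A \right) \geq \prod_{A \in \mathscr{A}_N}(1 - x(A)) > 0,
\end{equation*}
so in particular the closed set $Y_N := \ag^G \setminus \bigcup_{A \in \mathscr{A}_N} A$ of configurations avoiding every forbidden pattern whose support sits inside $\{g_0,\dots,g_N\}$ is non-empty. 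Here I should be slightly careful that the stated bound applies to the exact neighborhood $\Gamma(A)$; the cleanest route is to note that a positive measure complement forces $Y_N \neq \emptyset$, which is all I need.

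Finally I would conclude by a compactness argument. The sets $Y_N$ are closed in the compact space $\ag^G$, each is non-empty, and they are nested decreasingly, $Y_{N+1} \subseteq Y_N$, since enlarging $N$ only forbids more patterns. By the finite intersection property for the compact space $\ag^G$, the intersection $\bigcap_{N \in \NN} Y_N$ is non-empty. Any configuration in this intersection avoids every forbidden pattern of $\FF$ at every position, hence lies in $X$, proving $X \neq \emptyset$. The main obstacle I anticipate is purely bookkeeping rather than conceptual: I must make sure the finite truncations $\mathscr{A}_N$ are chosen so that (i) every bad event $A_{n,g}$ eventually appears in some $\mathscr{A}_N$, so that the limiting configuration genuinely avoids all of $\FF$, and (ii) the neighborhoods used when invoking the local lemma are consistent with the single global dependency condition~\eqref{eq.condition_LLL}. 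Both are handled by the geometric locality of Bernoulli measures, so once the truncation is set up correctly the passage to the limit is routine.
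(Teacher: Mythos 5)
Your proposal is correct and follows essentially the same route as the paper: truncate to the finite collection of bad events supported in $\{g_0,\dots,g_N\}$, observe that condition~\eqref{eq.condition_LLL} implies the truncated local lemma hypothesis because the product only loses factors in $(0,1)$, and conclude by compactness. The only cosmetic difference is that you invoke the finite intersection property of the nested closed sets $Y_N$ (which are indeed closed, each $A_{n,g}$ being a finite union of cylinders), whereas the paper picks a configuration from each $Y_N$ and extracts a convergent subsequence; these are interchangeable.
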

	
	\begin{proof}
		Consider an enumeration $\{g_k\}_{k\in\NN}$ of $G$. For every $n\in\NN$, we apply Lemma~\ref{lovaszlocallemma} to construct a configuration $x_n\in\ag^G$ that satisfies the following property: for every forbidden pattern $p \in \FF_k$ such that $k \leq n$ and every $g \in (g_k)_{k\leq n}$ such that $gS_k\subseteq \{g_k\}_{k\leq n}$, we have $(x_n) \notin [p]_g$ -- in other terms, the configuration $x_n$ avoids all the forbidden patterns in $\bigcup_{k \leq n}\FF_k$ on the finite set $\{g_1,\dots,g_n\}\subset G$. Indeed, in order to show the existence of $x_n$ we only need that for every $k \leq n$ and $g \in G$ such that $gS_k\subseteq \{g_k\}_{k\leq n}$,
		$$ \mu(A_{k,g})\leq x(k,g)\prod_{\substack{gS_k\cap hS_{k'}\neq\emptyset\\ hS_{k'}\subseteq \{g_0,\dots,g_n\} \\ (k',h) \neq (k,g), k \leq n}   }(1-x(k',h))$$
		which is a relaxation of condition~\eqref{eq.condition_LLL} by the fact that $0\leq x(k',h)\leq 1$. The local lemma holds since the set $\{g_0,\dots,g_n\}$ is finite and we only consider a finite number of forbidden patterns, consequently we only consider a finite number of bad events $A_{k,g}$.
		
		By compactness, we can extract from this sequence of configurations $(x_n)_{n\in\NN}$ a subsequence $(x_{\phi(n)})_{n\in\NN}$ converging to some $x \in \ag^G$. Then $x$ does not contain any forbidden pattern $p \in \FF$. Suppose it were the case, that is to say, there exists some $g\in G$ and $p \in \FF_m$ such that $x \in [p]_g$. Since there exists also $n,l$ such that $g = g_l$ and $gS_m\subset\{g_0,\dots,g_n\}$, by definition of the metric there exists some $N \geq \max\{m,n,l\}$ sufficiently big \cor[which with this property that]{such that $\phi(N)$} appears in the subsequence $(\phi(n))_{n\in\NN}$. Then $x_N$ contains the forbidden pattern $p$ somewhere in $(x_N)|_{\{g_1,\dots,g_N\}}$. This contradicts the construction of the sequence $(x_n)_{n\in\NN}$, thus $x \in X_{\FF}$.
	\end{proof}
	
	\subsection{A non-empty strongly aperiodic subshift over $\{0,1\}$ in any countable group.}
	\label{subsection.simpler_proof}
	
	Consider a configuration $x \in \{0,1\}^G$. We say that $x$ has \define{the distinct neighborhood property} -- in \cite{gao2009} they call $x$ a $2$-coloring -- if for every $h \in G \setminus \{1_G\}$ there exists a finite subset $T \subset G$ such that:
	
	$$\forall g \in G : x|_{ghT} \neq x|_{gT}.$$
	
	\begin{proposition}
		If a configuration $x \in \{0,1\}^G$ has the distinct neighborhood property, then the $G$-subshift $X := \overline{orb_\sigma(x)}$ is strongly aperiodic.
	\end{proposition}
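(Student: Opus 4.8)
The plan is to fix an arbitrary $y \in X = \overline{orb_\sigma(x)}$ and a hypothetical nontrivial period $p \in stab_\sigma(y)$ with $p \neq 1_G$, and to derive a contradiction from the distinct neighborhood property applied to $h = p$; since $X$ is automatically $\sigma$-invariant and closed (so it is a subshift), this suffices to prove strong aperiodicity. First I would unwind the stabilizer condition. By definition $\sigma_p(y) = y$ means $y_{p^{-1}h} = y_h$ for all $h \in G$, and substituting $h = pk$ this is equivalent to the convenient form
\[
y_{pk} = y_k \qquad \text{for every } k \in G.
\]

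Next I would exploit that $y$ lies in the orbit closure of $x$. There is a sequence $(g_n)_{n \in \NN}$ with $\sigma_{g_n}(x) \to y$, and since $(\sigma_{g_n}(x))_k = x_{g_n^{-1}k}$, convergence in the product topology means that on any fixed finite support $F$ we have $x_{g_n^{-1}t} = y_t$ for all $t \in F$ once $n$ is large. I would then invoke the distinct neighborhood property for $h = p$ to obtain a finite set $T \subset G$ with $x|_{gpT} \neq x|_{gT}$ for \emph{every} $g \in G$, and I would run the approximation on the particular finite support $F = T \cup pT$, choosing $n$ large enough that $\sigma_{g_n}(x)$ and $y$ agree on all of $F$.

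The contradiction then assembles as follows: for each $t \in T$ both $t$ and $pt$ lie in $F$, so setting $g = g_n^{-1}$ I get $x_{gt} = y_t$ and $x_{gpt} = y_{pt}$, and the stabilizer relation $y_{pt} = y_t$ forces $x_{gpt} = x_{gt}$ for all $t \in T$, i.e. $x|_{gpT} = x|_{gT}$ — directly contradicting the distinct neighborhood property at the element $g = g_n^{-1}$. I expect the only delicate point to be the bookkeeping of left- versus middle-translates: one must be careful that $g_n^{-1}(pt) = (g_n^{-1}p)t$ so that the two occurrences read off from $y$ really correspond to the supports $gpT$ and $gT$ of a single configuration $x$, and that the support $T \cup pT$ (rather than $T$ alone) is the correct finite window guaranteeing both values are pinned down by the approximation. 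Everything else is routine, and the argument makes no use of $y$ beyond its membership in $\overline{orb_\sigma(x)}$, so it applies uniformly to every point of $X$.
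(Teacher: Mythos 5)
Your proof is correct and follows essentially the same route as the paper's: approximate $y$ on a finite window by a translate of $x$, transport the periodicity relation $y_{pt}=y_t$ back to $x$, and contradict the distinct neighborhood property at $g = g_n^{-1}$. The only cosmetic difference is that the paper applies the property to $h^{-1}$ and compares the two convergent sequences $\sigma_{g_i}(x)$ and $\sigma_{hg_i}(x)$ on a single window, whereas you apply it to $p$ directly and enlarge the window to $T \cup pT$; both bookkeepings are sound.
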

	
	\begin{proof}
		
		Let $y \in X$. By definition there exists a sequence $(g_i)_{i \in \NN}$ such that $\sigma_{g_i}(x)$ converges to $y$ in the product topology. Suppose there is $h \neq 1_G$ such that $\sigma_h(y) = y$, then by continuity of the shift action under the product topology we have that $\sigma_{hg_i}(x) \to \sigma_{h}(y) = y$. Since $x$ has the distinct neighborhood property, there exists a finite subset $T$ of $G$ -- associated to $h^{-1}$ -- such that $\forall g \in G : x|_{gh^{-1}T} \neq x|_{gT}$. By definition of convergence in the metric, there exists $n \in \NN$ such that $T \subset \{g_0, g_1, \dots, g_n\}$ and $m \in \NN$ satisfying:
		
		$$\sigma_{hg_m}(x)|_{\{g_0, g_1, \dots, g_n\}} = y|_{\{g_0, g_1, \dots, g_n\}} = \sigma_{g_m}(x)|_{\{g_0, g_1, \dots, g_n\}}$$
		
		Therefore $\sigma_{hg_m}(x)|_T = \sigma_{g_m}(x)|_T$ which implies that $x|_{g_m^{-1}h^{-1}T} = x|_{g_m^{-1}T}$, a contradiction.	\end{proof}
	
	\begin{theorem}
		\label{theorem.free_subflow}
		Every countable group $G$ has a non-empty, strongly aperiodic subshift on the alphabet $\{0,1\}$.
	\end{theorem}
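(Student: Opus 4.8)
The plan is to reduce the statement to a non-emptiness assertion and then apply Lemma~\ref{corollary.LLL_nonemptiness}. By the proposition above, it suffices to produce a single configuration with the distinct neighborhood property, since then its orbit closure is a non-empty strongly aperiodic subshift over $\{0,1\}$. Concretely, I would fix once and for all a finite set $T_h$ for each $h\in G\setminus\{1_G\}$ and consider the subshift $X$ consisting of all $y\in\{0,1\}^G$ with $y|_{ghT_h}\neq y|_{gT_h}$ for every $h\neq 1_G$ and every $g\in G$. Every point of $X$ has the distinct neighborhood property, so by the proposition $X$ is strongly aperiodic, and the entire problem collapses to showing $X\neq\emptyset$.

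To fit the framework of Lemma~\ref{corollary.LLL_nonemptiness}, I would enumerate $G\setminus\{1_G\}=\{h_1,h_2,\dots\}$ and, for the index $n$, take the support $S_n:=T_n\cup h_nT_n$ together with the forbidden set $\FF_n:=\{q\in\{0,1\}^{S_n}\mid q_t=q_{h_nt}\text{ for all }t\in T_n\}$, so that the bad event is $A_{n,g}=\{x:x|_{gT_n}=x|_{gh_nT_n}\}$. The crucial freedom is the choice of $T_n$. We may assume $G$ is infinite, since finite groups admit such subshifts directly (any configuration fixed by no nontrivial element works, and the trivial group is vacuous). Then for every $h_n\neq 1_G$ and every prescribed size there is a set $T_n$ with $T_n\cap h_nT_n=\emptyset$: indeed the orbits of the map $t\mapsto h_nt$ are bi-infinite paths or cycles of length $\mathrm{ord}(h_n)\geq 2$, and selecting an independent set for this map yields $h_n$-independent sets of arbitrary finite cardinality. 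For such a $T_n$ the windows $gT_n$ and $gh_nT_n$ are disjoint and matched bijectively, so under the uniform Bernoulli measure $\mu$ one obtains the clean bound $\mu(A_{n,g})=2^{-|T_n|}$.

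It then remains to verify condition~\eqref{eq.condition_LLL}. The support of $A_{n,g}$ is $gS_n$ with $|S_n|\le 2|T_n|$, and an event $A_{m,g'}$ can fail to be independent of $A_{n,g}$ only when $gS_n\cap g'S_m\neq\emptyset$, which confines $g'$ to at most $|S_n|\,|S_m|\le 4|T_n|\,|T_m|$ group elements. I would therefore seek a solution of the form $x(n,g)=a^{|T_n|}$ for a single constant $a\in(0,1)$; setting $C:=\sum_{m\ge 1}|T_m|\,a^{|T_m|}$ and using $\log(1-a^{|T_m|})\ge -2a^{|T_m|}$, the sufficient inequality $2^{-|T_n|}\le a^{|T_n|}\prod_{m}(1-a^{|T_m|})^{4|T_n||T_m|}$ reduces, after extracting $|T_n|$-th roots, to the single requirement $a\ge \tfrac12 e^{8C}$. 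The only genuine obstacle is that $C$ appears on both sides, so one must break this circularity: fixing $a$ bounded away from $0$ and $1$ and then letting the sizes $|T_n|$ grow fast enough (for instance $|T_n|=n+M_0$ with a large additive constant $M_0$) drives $C=\sum_m|T_m|a^{|T_m|}$ below $\tfrac{\log 2}{8}$, which is exactly what makes $a\ge\tfrac12 e^{8C}$ achievable. With this choice Lemma~\ref{corollary.LLL_nonemptiness} yields $X\neq\emptyset$, and the reduction of the first paragraph completes the proof.
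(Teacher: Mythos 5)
Your proposal is correct and follows essentially the same route as the paper: reduce to the distinct neighborhood property via the preceding proposition, set up the bad events $A_{n,g}=\{x : x|_{gT_n}=x|_{gh_nT_n}\}$ with $T_n\cap h_nT_n=\emptyset$, and verify condition~\eqref{eq.condition_LLL} with weights exponential in $|T_n|$. The only difference is bookkeeping: the paper fixes $x(A_{n,g})=2^{-Cn/2}$ with $|T_n|=Cn$ and solves for $C\geq 17$, whereas you fix the base $a$ and inflate the sizes $|T_n|$ until $\sum_m |T_m|a^{|T_m|}$ is small enough, which is the same computation in a different parametrization.
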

	
	\begin{proof}
	
		The case where $G$ is finite is trivial, as the $G$-SFT given by $$X := \{x \in \{0,1\}^G \mid |x^{-1}(1)| = 1 \}$$ is strongly aperiodic. Indeed, let $x \in X$ and $g \in G$ be the only element such that $x_g = 1$. Let $h \in stab_{\sigma}(x)$ then $\sigma_{h}(x) = x$ which implies that $x_{h^{-1}g}= x_g = 1$ and thus $h = 1_G$. For the rest of the proof we suppose that $G$ is infinite.
		
		Let $(s_i)_{i\in\NN}$ be an enumeration of $G$ such that $s_0=1_G$. Choose $(T_i)_{i\in\NN}$ a sequence of finite subsets of $G$ such that for every $i\in\NN$, $T_i\cap s_i T_i =\emptyset$ and $|T_i|=C\cdot i$, where $C$ is a constant to be defined later. These sets always exist as $G$ is infinite.
		
		Consider the uniform Bernoulli probability $\mu$ in $\{0,1\}^G$ and the collection of sets $\mathscr{A} := \{A_{n,g}\}_{n \geq 1, g \in G}$ given by $ A_{n,g} = \{ x \in \{0,1\}^G \mid x|_{gT_n} = x|_{gs_nT_n}\}$. Note that each set is a union of cylinders and that the existence of a configuration~$\tilde{x}$ in the intersection of the complement of these sets allows us to conclude the theorem by producing a configuration with the distinct neighborhood property. Our strategy is to apply Lemma~\ref{corollary.LLL_nonemptiness} to ensure its existence.
		
		As the intersection $s_nT_n \cap T_n$ is empty we have that $\mu(A_{n,g}) = 2^{-|T_n|} = 2^{-C n}$. Consider one set $A_{n,g}$. The number of sets $A_{m,g'}$ for a fixed $m \in \NN$ which are not independent from $A_{n,g}$ is at most $4C^2nm$ -- observe that $A_{n,g}$ is independent from $A_{m,g'}$if and only if $(gT_n\cup gs_nT_n)$ does not intersect $(g'T_m\cup g's_mT_m)$. We also define $x(A_{n,g}) := 2^{-\frac{Cn}{2}}$. Therefore, in order to conclude we must prove that: $$2^{-Cn} \leq 2^{-\frac{Cn}{2}} \prod_{m = 1}^{\infty}(1-2^{-\frac{Cm}{2}})^{4C^2nm}.$$
		
		Using the fact that $1-x \geq 2^{-2x}$ if $x \leq 1/2$ we obtain the following bound:
		
		\begin{align*}
		2^{-\frac{Cn}{2}} \prod_{m = 1}^{\infty}(1-2^{-\frac{Cm}{2}})^{4C^2nm}& \geq 2^{-\frac{Cn}{2}} \prod_{m = 1}^{\infty}2^{-8C^2nm2^{-\frac{Cm}{2}} } \\
		& = 2^{-\frac{Cn}{2}}2^{-8C^2 \sum_{m = 1}^{\infty}nm2^{-\frac{Cm}{2}} }
		\end{align*}

		Therefore, it suffices to prove that:
		\begin{align*}
		2^{-\frac{Cn}{2}} & \leq 2^{-8C^2 \sum_{m = 1}^{\infty}nm2^{-\frac{Cm}{2}} }\\
		\iff 1 & \geq  16C \sum_{m = 1}^{\infty}m2^{-\frac{Cm}{2}} \\
		\iff 1 & \geq  16C \frac{2^{\frac{C}{2}}}{(2^{\frac{C}{2}}-1)^2 } \\		
		\end{align*}
		
		The previous inequality holds true for $C \geq 17$. Therefore choosing $C = 17$ completes the proof by application of Lemma~\ref{corollary.LLL_nonemptiness}.\end{proof}

		\subsection{A graph-oriented construction and some computational properties}
		\label{subsection.strongly_aperiodic_subshifts_LLL}
		
		In this subsection we present another construction of a non-empty strongly aperiodic subshift. This construction is not as general as the previous one, as it only holds for finitely generated groups, and the size of the alphabet is rather large. Nevertheless, it can be defined by a natural property which allows us to use it in computability constructions with ease. 
		
		Let $\Gamma = (V,E)$ be a simple graph, consider a finite alphabet $\ag$ and a coloring $x \in \ag^{V}$ of the vertices of $\Gamma$. We say $x$ contains a \define{vertex-square path} if there exists an odd length path $p = v_1\dots v_{2n}$ such that $x_{v_i} = x_{v_{i+n}}$ for every $1 \leq i \leq n$. If the coloring $x$ does not contain any vertex-square path then we say it is a \define{square-free vertex coloring}. Next we show a proposition which is a slight modification of a proof which can be found in~\cor{\cite[Theorem~1]{alonetal_nonrepetitivecoloringofgraphs}}.
		
		\begin{proposition}\label{propositionsquarefreecoloringgraph}
			Let $G$ be a group which is generated by the finite set $S$. Then there exists a square-free vertex coloring of the undirected right Cayley graph $\Gamma(G,S)$ with $2^{19}|S|^2$ colors.
		\end{proposition}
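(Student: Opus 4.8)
The plan is to obtain the coloring through the asymmetric Lovász local lemma (Lemma~\ref{lovaszlocallemma}) combined with compactness, in the spirit of Lemma~\ref{corollary.LLL_nonemptiness}. Fix $N := 2^{19}|S|^2$ and take $\ag$ with $|\ag| = N$, and endow $\ag^{G}$ with the uniform Bernoulli measure $\mu$, so that the colors of distinct vertices are independent and uniform. For every simple path $p = v_1\dots v_{2n}$ in $\Gamma(G,S)$, with $2n$ pairwise distinct vertices, I introduce the bad event $A_p = \{x : x_{v_i} = x_{v_{i+n}}\text{ for }1\le i\le n\}$ that the coloring reads a square along $p$. A square-free vertex coloring is precisely a configuration avoiding every $A_p$. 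Since the $2n$ vertices are distinct, the $n$ equalities are independent and $\mu(A_p) = N^{-n}$.

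Next I control the dependency graph. Two events $A_p, A_q$ can fail to be independent only when the underlying paths share a vertex, because $\mu$ is a product measure. Writing $D := 2|S|$ for a bound on the degree of $\Gamma(G,S)$, the number of walks of a given length emanating from a fixed vertex is at most the corresponding power of $D$, so the number of simple paths with $2m$ vertices passing through a fixed vertex is at most $2m\,D^{2m-1}$; hence at most $4nm\,D^{2m}$ paths with $2m$ vertices meet a given path $p$ with $2n$ vertices. I assign the weight $x(A_q) = t^{m}$ to every event coming from a $2m$-vertex path, where $t := \tfrac{1}{2D^{2}} \le \tfrac12$.

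With these choices the hypothesis of Lemma~\ref{lovaszlocallemma} for $A_p$ reads $N^{-n} \le t^{n}\prod_{m\ge 1}(1-t^{m})^{4nmD^{2m}}$, and since every factor carries the same power of $n$ it factorizes as $\big[\,t\prod_{m\ge1}(1-t^m)^{4mD^{2m}}\big]^{n}$; thus it suffices to check $N^{-1} \le t\prod_{m\ge1}(1-t^m)^{4mD^{2m}}$. Using $1-y \ge 2^{-2y}$ for $0\le y\le \tfrac12$ together with $\sum_{m\ge1} m\,u^{m} = u/(1-u)^2$ at $u := tD^2 = \tfrac12$, the product is at least $2^{-16}$, whence the right-hand side is at least $\tfrac{1}{2D^2}2^{-16} = \tfrac{1}{2^{17}D^2} = \tfrac{1}{2^{19}|S|^2} = N^{-1}$, which is exactly the inequality required.

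Finally, $\Gamma(G,S)$ may be infinite, so there are infinitely many events; as in the proof of Lemma~\ref{corollary.LLL_nonemptiness} I would first apply Lemma~\ref{lovaszlocallemma} to the finite subcollection of events whose paths lie inside a ball $B(1_G,R)$, producing colorings with no square-path inside the ball, and then extract a convergent subsequence whose limit is square-free on all of $G$. The main obstacle is the combinatorial bookkeeping of the second step: bounding the number of $2m$-vertex paths meeting a fixed path so that the dependency product stays multiplicative in $n$ and the series $\sum_m m\,u^m$ converges. Once this is in place, the arithmetic collapses to the single inequality above and forces the clean threshold $2^{19}|S|^2$.
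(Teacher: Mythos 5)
Your proposal is correct and follows essentially the same route as the paper's proof: the same bad events $A_p$ under the uniform Bernoulli measure, the same weights $x(A_p)=(8|S|^2)^{-n}=t^{n}$, the same dependency bound $4nm(2|S|)^{2m}$, and the same estimate via $1-y\ge 2^{-2y}$ and $\sum_{m\ge1}m u^{m}$, yielding the identical threshold $2^{19}|S|^2$. The finite-subcollection-plus-compactness step you sketch at the end is exactly what Lemma~\ref{corollary.LLL_nonemptiness} already packages, so the paper simply invokes that lemma directly.
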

		
		\begin{proof}
			Consider a finite alphabet $\ag$ and let $X = \ag^{\Gamma(G,S)}$ be the set of all vertex colorings of the Cayley graph $\Gamma(G,S)$. We define $\mu$ as the uniform Bernoulli probability, that is, for $a \in \ag$ and $g \in G$ then $$\mu( \{x \in X \mid x_g = a  \} ) = \frac{1}{|\ag|}.$$
			
			Consider $\mathcal{P}$ as the set of all odd length paths in $\Gamma(G,S)$. For $p \in \mathcal{P}$ let $A_{p}$ be the set of colorings of $\Gamma(G,S)$ such that $p$ is a square under that coloring and note that if $p$ is of length $2n-1$ then $\mu(A_p)= |\ag|^{-n}$ if there exists a path of said length. Consider $\ag_n= \{ A_p \mid p \text{ has length } 2n-1 \}$ and $\mathscr{A} = \{ A_p \mid p \in \mathcal{P}\} = \bigcup_{n \geq 1}\ag_n$. In order to apply Lemma~\ref{corollary.LLL_nonemptiness}, we define  $x(A_p) := (8|S|^2)^{-n}$ for $A_p \in \ag_n$. The lemma holds if for every $A \in \mathscr{A}$ then $\mu(A) \leq x(A)\prod_{B \in \Gamma(A)}(1-x(B))$. Replacing both sides yields the necessary condition:
			
			$$\forall n \geq 1, \ |\ag|^{-n} \leq (8|S|^2)^{-n}\prod_{j \geq 1}(1-(8|S|^2)^{-j})^{|\Gamma(A_p)\cap \ag_j|}.$$
			
			$|\Gamma(A_p)\cap A_j|$ corresponds to the amount of paths of length $2j-1$ which share a vertex with $p$. If $p$ has length $2n-1$ this can be bounded by $4nj(2|S|)^{2j}$. Indeed, there are at most $(2|S|)^{2j}$ paths of length $2j-1$ starting from a vertex $v$. Each of these paths can intersect a given vertex of $p$ in $2j$ positions and there are $2n$ vertices in $p$. \cor[]{Hence, we need to show: $$\forall n \geq 1, \ |\ag|^{-n} \leq (8|S|^2)^{-n}\prod_{j \geq 1}(1-(8|S|^2)^{-j})^{4nj(2|S|)^{2j}}.$$}

			 Using the inequality $1-x \geq 2^{-2x}$ if $x \leq 1/2$, the requirement to apply the lemma can be restrained further so that the following is required to conclude:
			
			\cor[]{\begin{align*}
				\forall n \geq 1, |\ag|^{-n} & \leq (8|S|^2)^{-n}\prod_{j \geq 1}2^{-8nj(8|S|^2)^{-j}(4|S|^2)^{j}}\\ & = (8|S|^2)^{-n}\prod_{j \geq 1}2^{-8nj2^{-j}}
				\end{align*}
				}
			
			\cor[E]{or e}quivalently:\begin{align*}
			|\ag| &\geq (8|S|^2) 2^{8\sum_{j \geq 1}{j2^{-j} } }\\
			& \geq 2^{19}|S|^2\cor{.}
			\end{align*} 
			
			\cor[Which]{The latter inequality} is satisfied by hypothesis, therefore, there exists a coloring of the graph such that no path of odd length is a square under that coloring.\end{proof}

		\begin{theorem}
			\label{theorem.strongly_aperiodic1}
			Every finitely generated group $G$ has a non-empty $G$-effectively closed strongly aperiodic subshift.
		\end{theorem}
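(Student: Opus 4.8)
The plan is to take the square-free colorings produced by Proposition~\ref{propositionsquarefreecoloringgraph} as the points of the desired subshift. Fix a finite alphabet $\ag$ with $|\ag| = 2^{19}|S|^2$ and set
$$X := \{ x \in \ag^G \mid x \text{ is a square-free vertex coloring of } \Gamma(G,S)\}.$$
First I would check that $X$ is a non-empty subshift. Non-emptiness is exactly Proposition~\ref{propositionsquarefreecoloringgraph}. For shift-invariance, the key observation is that left multiplication by any $g \in G$ is a graph automorphism of the \emph{right} Cayley graph $\Gamma(G,S)$, since it sends the edge $\{h,hs\}$ to $\{gh,ghs\}$; hence it carries paths to paths and squares to squares, so being square-free is preserved by $\sigma$. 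Closedness is immediate, since $X = X_\FF$ where $\FF$ is the collection of all patterns supported on the vertex set of an odd-length path that realize the square condition; thus $X$ is defined by a set of forbidden patterns and is a $G$-subshift.

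Next, for strong aperiodicity, I would show that every $x \in X$ has trivial stabilizer. Suppose $h \in stab_\sigma(x)$ with $h \neq 1_G$. Unwinding $\sigma_h(x)=x$ gives $x_g = x_{hg}$ for all $g \in G$, i.e. $x$ is invariant under left multiplication by $h$. Writing $h = s_{j_1}^{\epsilon_1}\cdots s_{j_k}^{\epsilon_k}$ as a non-empty word over $S\cup S^{-1}$ and putting $u_i := s_{j_1}^{\epsilon_1}\cdots s_{j_i}^{\epsilon_i}$ (so $u_0=1_G$ and $u_k=h$), I would form the sequence $W=(u_0,\dots,u_{k-1},hu_0,\dots,hu_{k-1})$. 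Consecutive vertices differ by a single generator, including the central step from $u_{k-1}$ to $hu_0 = h = u_{k-1}s_{j_k}^{\epsilon_k}$, so $W$ is a path of odd length $2k-1$; and the left $h$-invariance gives $x_{u_i}=x_{hu_i}$ for every $i$, which is precisely the square condition. This contradicts square-freeness, so $stab_\sigma(x)=\{1_G\}$ and $X$ is strongly aperiodic.

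Finally, for the $G$-effectively closed property, I would exhibit a Turing machine with oracle $\texttt{WP}(G)$ recognizing a set of pattern codings whose consistent members codify $\FF$. A square path is specified by a finite sequence of generators (the edges of the walk) together with a coloring obeying the square condition, so the machine enumerates all finite tuples of words $w_0,w_1,\dots,w_{2n-1}$ with $w_{i+1}=w_i s$ for some $s\in S\cup S^{-1}$, paired with all color assignments $(a_i)$ satisfying $a_i=a_{i+n}$, and outputs each such datum as a pattern coding. Consistency of a coding, namely the implication $w_i=_G w_j \Rightarrow a_i=a_j$, is decidable with the oracle since $w_i=_G w_j$ iff $w_iw_j^{-1}\in\texttt{WP}(G)$; the consistent codings are exactly those whose square coloring descends to a well-defined coloring of the (possibly self-intersecting) walk, so they codify precisely the forbidden square-path patterns and $X=X_\FF$.

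I expect the main obstacle to be a bookkeeping subtlety rather than a conceptual one: one must read ``odd length path'' in the sense already used in Proposition~\ref{propositionsquarefreecoloringgraph}, that is, as a walk whose vertices may repeat, which is exactly what the counting bound $(2|S|)^{2j}$ there accounts for. This matters because the path $W$ built in the aperiodicity step need not be vertex-simple when the chosen word for $h$ forces coincidences $u_i = hu_j$. Keeping the forbidden objects as walks is also what makes the effective enumeration clean, since walks are produced directly from generator sequences while the consistency check delegated to the word-problem oracle absorbs all coincidences among the vertices.
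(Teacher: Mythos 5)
Your overall strategy --- take the square-free colorings of $\Gamma(G,S)$ given by Proposition~\ref{propositionsquarefreecoloringgraph} as the subshift and extract a square from a nontrivial stabilizer element --- is exactly the paper's, but the way you dispose of the non-simplicity of your walk $W$ is a genuine gap, and it is precisely the point to which the paper devotes most of its proof. First, the computation $\mu(A_p)=|\ag|^{-n}$ in Proposition~\ref{propositionsquarefreecoloringgraph} requires the $2n$ vertices of $p$ to be pairwise distinct: if $v_i=v_{i+n}$ the corresponding constraint is vacuous and the measure is strictly larger, so ``odd length path'' there must be read as a vertex-simple path, contrary to your closing claim that the walk reading ``is exactly what the counting bound accounts for'' (the bound $(2|S|)^{2j}$ is only an over-count of neighbours; the measure estimate is what needs simplicity). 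Worse, if you forbid square \emph{walks}, as your last two paragraphs propose, the subshift is empty outright: for any generator $s$ the back-and-forth walk $1_G,\,s,\,1_G,\,s$ has odd length $3$ and satisfies the square condition $x_{v_1}=x_{v_3}$, $x_{v_2}=x_{v_4}$ for \emph{every} coloring, so every pattern with support $\{1_G,s\}$ would be forbidden. Hence you cannot ``keep the forbidden objects as walks''; you must exhibit a genuine simple path, and the $W$ you build from an arbitrary word for $h$ need not be one, as you yourself observe.

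The paper closes this gap by not working with $h$ directly: it writes $g=_G uwv$ with $u=_G v^{-1}$, chooses such a factorization with $|w|$ minimal, builds the walk from $w$, and uses minimality to show that any repeated vertex would let one cancel a letter from both ends of $w$, contradicting minimality --- so the walk is a vertex-simple path. Since $h:=u^{-1}gu$ is the element represented by $w$ and (as the paper argues via normality of the stabilizer, or equivalently because stabilizers along an orbit are conjugate and $X$ is shift-invariant) $h$ still acts as a period on a configuration of $X$, one gets a square on an honest path, which is the contradiction. This conjugation-and-minimality step is the missing idea in your proposal. The rest of what you wrote is fine: non-emptiness from Proposition~\ref{propositionsquarefreecoloringgraph}, shift-invariance because left multiplication is an automorphism of the right Cayley graph, and $G$-effective closedness, noting that the oracle for $\texttt{WP}(G)$ lets the machine \emph{decide} whether $w_i=_Gw_j$ and hence restrict the enumeration to codings of vertex-simple paths.
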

		
		\begin{proof}
			Let $S$ be a set of generators of $G$ and $\ag$ an alphabet such that $|\ag| \geq 2^{19}|S|^2$. Consider the set of forbidden patterns $\FF$ defined as follows: Take $\mathcal{P}$ the set of all finite paths of odd length of $\Gamma(G,S)$. For every $p\in \mathcal{P}$ we define the set of patterns $\Pi_p$ as those with support $p$ and such that they are vertex-square paths. Let $\FF = \bigcup_{p\in \mathcal{P}}\Pi_p$ and let $X = X_{\FF}$ be the $G$-effectively closed subshift -- vertex-square paths can be recognized with a Turing machine with access to $\texttt{WP}(G)$ -- defined by this set of forbidden patterns. By Proposition~\ref{propositionsquarefreecoloringgraph} this subshift is non-empty. We claim it is strongly aperiodic.
			
			Let $x \in X$ and $g \in stab_{\sigma}(x)$. We are going to show that if $g \neq 1_G$ then $x$ contains a vertex-square path. Consider an expression of $g$ as an element of $(S\cup S^{-1})^*$ such it can be factorized as $g=_G uwv$ with $u =_G v^{-1}$. This can always be done by choosing $u=v=\varepsilon$ and $w$ a product of generators producing $g$. Amongst all those possible representations choose one such that $|w|$ is minimal. Clearly $|w| = 0$ implies that $g = 1_G$, so we suppose $|w|>0$. Let $w = w_1\dots w_n$ and consider the odd length walk $\pi = v_0v_1\dots v_{2n-1}$ defined by:
			
			$$v_i = \begin{cases}
			1_G & \text{ if } i = 0 \\
			w_1\dots w_i &\text{ if } i\in \{1,\dots, n \} \\
			ww_1\dots w_{i-n}& \text{ if } i\in \{n+1,\dots, 2n-1 \} \\
			\end{cases}$$
			
			We claim that $\pi$ is a path. Indeed, by definition, $w$ can not be reduced and thus there are no repeated vertices in $v_0v_1\dots v_n$ nor in $v_{n+1}\dots v_{2n-1}$. Therefore if there is a repeated vertex then it appears once in both parts. Suppose that it happens, thus we can consider two factorizations $w = ab$ and $w = cd$ such that $a =_G abc$. We obtain that $b = c^{-1}$. Obviously $|c| = |b|$, if not, $w$ can be written as $abcc^{-1}=_G ac^{-1}$ or $b^{-1}bcd=_G b^{-1}d$ which contradicts the minimality of $|w|$. Without loss of generality, we can replace $c$ by the word obtained by reversing the order and inversing the letters of $b$.  Moreover, $|c|>0$ and thus $|b| > 0$ which means that $w$ is written as follows:
			
			$$w = a_1\dots a_k b_1 \dots b_l = b_l^{-1} \dots b_1^{-1}d_1\dots d_k$$
			
			Therefore we can factorize $b_l^{-1}$ and $b_l$ from both sides obtaining a smaller word $w'$ in the representation of $g$. This contradiction shows that indeed $\pi$ is a path. We now show that it is a square-vertex path. As $g\in stab_{\sigma}(x)$, we also have $g^{-1} \in stab_{\sigma}(x)$ and thus for each $h \in G$ we have $x_{gh} = x_{h}$. Fix $h = uw_1\dots w_j$ for some $j \in \{1,\dots,n\}$, we obtain that $$x_{uw_1\dots w_j} = x_{uwu^{-1}uw_1\dots w_j} = x_{uww_1\dots w_j}.$$
			Applying $\sigma^{u}$ to both sides we obtain that $x_{w_1\dots w_j} = x_{ww_1\dots w_j}$ and therefore $x_{v_j} = x_{v_{j+n}}$, yielding a square-vertex path. Therefore $|w| = 0$ and thus $g = uv = 1_G$.
		\end{proof}
		
		Theorem~\ref{theorem.strongly_aperiodic1} provides a non-empty strongly aperiodic $G$-effectively closed subshift. Recently Jeandel~\cite{Jeandel2015} has shown that for recursively presented groups, if the group admits an effectively closed strongly aperiodic subshift then its word problem is decidable. Moreover, he has shown that the same conclusion stands when we allow every configuration to have a finite --instead of trivial-- stabilizer. Our result actually shows the converse, that is, that every recursively presented group with decidable word problem admits a non-empty strongly aperiodic effectively closed subshift. In the remainder of this section we prove this equivalence.
		
		\begin{lemma}\label{compacityfunctionaperiodic}
			Let $G$ be a finitely generated group and $X \subset \ag^G$ a non-empty strongly aperiodic subshift. There exists a function $f: \NN \to \NN$ such that for every $x \in X$ \cor[then]{, if} $g \in B(1_G,n) \setminus \{1_G\}$ \cor[$\implies$]{then} $x|_{B(1_G,f(n))} \neq x|_{B(g,f(n))}$.
		\end{lemma}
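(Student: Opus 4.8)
The plan is to read the inequality $x|_{B(1_G,f(n))} \neq x|_{B(g,f(n))}$ as a statement about \emph{translated} patterns, exactly in the spirit of the distinct neighborhood property. Since left multiplication by $g$ is an automorphism of $\Gamma(G,S)$ sending $1_G$ to $g$ and preserving distances, one has $B(g,r)=gB(1_G,r)$; identifying the two supports via $t\mapsto gt$, the pattern $x|_{B(g,r)}$ corresponds to $t\mapsto x_{gt}$ on $B(1_G,r)$. Thus $x|_{B(1_G,f(n))}\neq x|_{B(g,f(n))}$ means precisely that there exists $h\in B(1_G,f(n))$ with $x_h\neq x_{gh}$, i.e. that $\sigma_{g^{-1}}x$ and $x$ already disagree inside $B(1_G,f(n))$. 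With this reading, strong aperiodicity supplies the pointwise fact that such a disagreement exists for each individual $x$, and the whole content of the lemma is to make the radius \emph{uniform} over $x\in X$; the tool for this uniformization is the compactness of $X$.

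First I would fix a nontrivial $g\in G$ and, for each $r\in\NN$, define
$$ D_{g,r} := \{\, x\in X \mid \exists h\in B(1_G,r):\ x_h\neq x_{gh}\,\} = \bigcup_{h\in B(1_G,r)} \{\, x\in X \mid x_h\neq x_{gh} \,\}. $$
Because $G$ is finitely generated, $B(1_G,r)$ is finite, so $D_{g,r}$ is a finite union of clopen sets and hence open in $X$; moreover $D_{g,r}\subseteq D_{g,r+1}$, so $(D_{g,r})_{r\in\NN}$ is an increasing sequence of open sets.

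Next I would show these sets cover $X$. Given $x\in X$, strong aperiodicity gives $stab_\sigma(x)=\{1_G\}$, and since $g^{-1}\neq 1_G$ we get $\sigma_{g^{-1}}x\neq x$; concretely there is some $h\in G$ with $x_h\neq x_{gh}$, so choosing $r$ large enough that $h\in B(1_G,r)$ places $x\in D_{g,r}$. Hence $X=\bigcup_{r}D_{g,r}$ is an open cover of the compact space $X$, and by the increasing property a single $D_{g,r_g}$ already equals $X$ for some radius $r_g\in\NN$; that is, for this fixed $g$ every configuration of $X$ disagrees with $\sigma_{g^{-1}}x$ within $B(1_G,r_g)$.

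Finally, since $B(1_G,n)\setminus\{1_G\}$ is finite, I would set $f(n):=\max\{\, r_g \mid g\in B(1_G,n)\setminus\{1_G\}\,\}$ (with $f(0)=0$, as the implication is vacuous there), which by construction witnesses the claim for every $x\in X$ and every nontrivial $g$ of length at most $n$. The step I would treat most carefully, and the only genuinely delicate point, is the bookkeeping of the translation identification between $x|_{B(1_G,f(n))}$ and $x|_{B(g,f(n))}$ — in particular that disagreement of these two patterns is equivalent to $\sigma_{g^{-1}}x\neq x$ on $B(1_G,f(n))$; once that is pinned down the compactness step is routine. An equivalent phrasing avoiding the open-cover language takes a sequence $x^{(m)}\in X$ whose disagreement radius tends to infinity, extracts a convergent subsequence $x^{(m_j)}\to y\in X$ by compactness, and observes that $y$ then satisfies $y_h=y_{gh}$ for all $h$, i.e. $g^{-1}\in stab_\sigma(y)$ with $g^{-1}\neq 1_G$, contradicting strong aperiodicity.
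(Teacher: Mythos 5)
Your proof is correct and uses essentially the same idea as the paper: strong aperiodicity gives a disagreement radius for each individual $x$, and compactness of $X$ makes it uniform, with a final max over the finitely many $g\in B(1_G,n)\setminus\{1_G\}$. The paper runs the compactness step in its sequential form (a sequence of counterexamples with a fixed $\bar g$, a convergent subsequence, and a limit point with nontrivial stabilizer), which is exactly the alternative phrasing you note at the end, so the two arguments coincide.
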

		
		\begin{proof}
			Suppose $f$ does not exist, thus there exists $n \in \NN$ and a sequence $(x_j,g_j)_{j \in \NN} \subset X\times  B(1_G,n) \setminus\{1_G\}$ such that $x_j|_{B(1_G,j)} = x_j|_{B(g_j,j)}$. As $B(1_G,n)$ is finite there exists $\bar{g} \neq 1_G$ which appears infinitely often in $(g_j)_{j \in \NN}$. Consider the subsequence $(x_k)_{k \in \NN, g_{k} = \bar{g}}$ and from there extract a  converging subsequence $(x_{k_{\alpha}}) \to \bar{x} \in X$. We claim that $\bar{g}^{-1} \in stab_{\sigma}(\bar{x})$. By definition of convergence, for every $m \in \NN$ there exists $N_{\alpha} \geq m$ such that $\bar{x}|_{B(1_G,m+n)} = (x_{N_{\alpha}})|_{B(1_G,m+n)}$ and thus $$\bar{x}|_{B(1_G,m)} = (x_{N_{\alpha}})|_{B(1_G,m)} = (x_{N_{\alpha}})|_{B(\bar{g},m)} = \bar{x}|_{B(\bar{g},m)}$$
			
			So for every $m \in \NN$ we have $\bar{x}|_{B(1_G,m)} = \bar{x}|_{B(\bar{g},m)}$ and therefore $\forall g \in G: \bar{x}_g = \bar{x}_{\bar{g}g}$. Which yields a contradiction as $X$ is strongly aperiodic. \end{proof}
		
		\begin{theorem}
			Let $G$ be a recursively presented and finitely generated group. There exists a non-empty strongly aperiodic effectively closed subshift if and only if $\texttt{WP}(G)$ is decidable.
		\end{theorem}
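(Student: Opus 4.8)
The plan is to prove the two implications separately; the direction ``$\texttt{WP}(G)$ decidable $\Rightarrow$ existence'' is essentially immediate, while the converse is the substantial part. For the easy direction I would simply invoke Theorem~\ref{theorem.strongly_aperiodic1}, which produces a non-empty, strongly aperiodic, $G$-effectively closed subshift. By definition this subshift is described by a Turing machine with oracle $\texttt{WP}(G)$; if $\texttt{WP}(G)$ is decidable, every oracle call can be replaced by an honest computation, so the very same subshift is in fact $\ZZ$-effectively closed, and nothing else is needed.

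For the converse, let $X \subseteq \ag^G$ be non-empty, strongly aperiodic and $\ZZ$-effectively closed. Since $G$ is recursively presented, $\texttt{WP}(G)$ is recursively enumerable, so it suffices to show its complement is also recursively enumerable, i.e. to give a semi-algorithm halting on input $w \in (S\cup S^{-1})^*$ exactly when $w \ne_G 1_G$; decidability then follows by dovetailing the two semi-algorithms. The bridge to the subshift is Lemma~\ref{compacityfunctionaperiodic}. For a word $w$ and $r \in \NN$ set $U_{r,w} := \bigcup_{|v| \le r}\{x \in \ag^G : x_v \ne x_{wv}\}$, a clopen set. I claim $w \ne_G 1_G$ iff $X \subseteq U_{r,w}$ for some $r$. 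Indeed, if $w =_G 1_G$ then $x_{wv}=x_v$ always, so $U_{r,w}=\emptyset$ and, as $X\ne\emptyset$, $X\not\subseteq U_{r,w}$ for every $r$; conversely if $w\ne_G 1_G$, writing $\bar w$ for the element it represents and $n=|w|$ (so that $\bar w \in B(1_G,n)\setminus\{1_G\}$), Lemma~\ref{compacityfunctionaperiodic} gives $x|_{B(1_G,f(n))}\ne x|_{B(\bar w,f(n))}$ for every $x\in X$, which is exactly $X\subseteq U_{f(n),w}$. Note that $f$ need not be computed: only the existence of some suitable $r$ matters, and the semi-algorithm will search over all $r$.

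It therefore remains to show that ``$X\subseteq U_{r,w}$'', equivalently $X\cap(\ag^G\setminus U_{r,w})=\emptyset$, is semi-decidable uniformly in $(r,w)$. Write $X=X_\FF$; effective closedness makes $\FF$ available as an r.e. set of pattern codings, and recursive presentation makes the relation $v=_G v'$ r.e., so we may compute an increasing chain of partial equivalences $\sim_s$ on words with $\bigcup_s \sim_s$ equal to true equality. By compactness of $\ag^G$, $X\cap(\ag^G\setminus U_{r,w})=\emptyset$ holds iff there are a finite $\FF'\subseteq\FF$ and a radius $R$ such that no labelling of $B(1_G,R)$ is consistent, avoids $\FF'$, and meets the matching condition defining $\ag^G\setminus U_{r,w}$. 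The semi-algorithm then dovetails over all triples $(\FF',R,s)$, using only the equalities in $\sim_s$ both to test consistency and to detect occurrences of patterns of $\FF'$, and checks whether no such labelling of $B(1_G,R)$ survives. Correctness rests on monotonicity: using only the discovered equalities $\sim_s$ is \emph{permissive}, so the set of labellings accepted at stage $s$ is a superset of the truly valid ones; hence if the algorithm sees this set become empty, the true set is empty as well (soundness), while completeness holds because these accepted sets form a decreasing chain of finite sets whose intersection is the truly valid set, so if the latter is empty some finite stage already is. This exhibits the complement of $\texttt{WP}(G)$ as an r.e. set and finishes the proof.

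I expect the main obstacle to be precisely this last step: we never have access either to the true geometry of the Cayley graph or to the full forbidden set, but only to monotone r.e. approximations of both, so one must justify that reasoning with these approximations is sound. The permissive/superset observation above is exactly what rescues it, and getting that monotonicity bookkeeping right is where care is required.
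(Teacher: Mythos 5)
Your proposal is correct, and its skeleton coincides with the paper's: the easy direction is handled identically by invoking Theorem~\ref{theorem.strongly_aperiodic1} and replacing oracle calls by computations, and the converse is driven by exactly the same bridge, namely the function $f$ of Lemma~\ref{compacityfunctionaperiodic} reducing ``$w\neq_G 1_G$'' to the assertion that the two balls of radius $f(|w|)$ around $1_G$ and $w$ must disagree in every configuration of $X$. Where you genuinely diverge is in how that assertion is semi-decided. The paper does not reason about finite subcovers at all: it invokes a result of [ABS14] guaranteeing that, for a recursively presented group, one may take the defining machine $T$ to be \emph{maximal} --- accepting every inconsistent pattern coding together with \emph{all} consistent codings of patterns not extendable to configurations of $X$. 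With that machine in hand the test is one line: enumerate the finite sets $\Pi_i$ of codings forcing the two balls to agree and accept $w$ once every coding in some $\Pi_i$ is accepted by $T$. You instead work with an arbitrary defining machine and re-derive the needed semi-decidability from scratch, via compactness of $\ag^G\setminus U_{r,w}$, a finite subcover by cylinders of forbidden patterns, and the monotone ``permissive'' approximation of both $=_G$ and $\FF$. Your soundness/completeness bookkeeping is right, including the point you do not make explicit but which your framework absorbs: the enumerated codings cannot be effectively filtered for consistency, yet spurious inconsistent codings in $\FF'$ are harmless, since a truly consistent labelling can never exhibit a $\sim_s$-detected occurrence of an inconsistent coding (it would have to take two values on one group element), so truly valid labellings are never wrongly rejected. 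The trade-off is clear: the paper's route is shorter but outsources the real content to the maximality result of [ABS14]; yours is self-contained and in effect reproves the relevant special case of that result, at the cost of the careful monotonicity argument you identify as the crux.
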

		
		\begin{proof}
			Every $G$-effectively closed subshift is effectively closed when $\texttt{WP}(G)$ is decidable. Therefore Theorem~\ref{theorem.strongly_aperiodic1} yields the desired construction. Conversely, suppose there is a non-empty effectively closed subshift $X$ which is strongly aperiodic. As $G$ is recursively presented then $\texttt{WP}(G)$ is recognizable. Let $T$ be a Turing machine which accepts every inconsistent pattern coding and a maximal set of consistent pattern codings which generates $\FF$ such that $X = X_{\FF}$. The existence of such a machine in the case of a recursively presented group is given in~\cite{ABS2014}. 
			
			Let $w \in (S \cup S^{-1})^*$. We present an algorithm which accepts if and only if $w \neq_G 1_G$. Consider the ball of size $n$ in the free monoid over the alphabet $(S \cup S^{-1})^*$, that is $\Lambda_n = \{u \in (S \cup S^{-1})^*| |u| \leq n \}$ and consider the set $\Lambda_n \cup w\Lambda_n$. For each one of these sets we construct the set $\Pi_n$ of all pattern codings $c$ such that for $u \in \Lambda_n$ then $(u,a) \in c$ if and only if $(wu,a) \in c$. That is, we force the ball of size $n$ around the empty word $\epsilon$ and $w$ to be the same. Consider the algorithm which iteratively runs $T$ on every pattern coding of $\Pi_1, \Pi_2, \dots \Pi_j$ up to $j$ steps and then does $j \leftarrow j+1$ and which accepts $w$ if and only if every pattern coding in a particular $\Pi_i$ is accepted by $T$. If $w =_G 1_G$ the algorithm can never accept as it would mean no patterns are constructible around $1_G$ and thus $X = \emptyset$. Conversely, if $w \neq_G 1_G$ then using the function $f$ given by 
			Lemma~\ref{compacityfunctionaperiodic} we get that for every $x \in X$ if $w \neq_G 1_G$ then $x|_{B(1_G,f(|w|))} \neq x|_{B(w,f(|w|))}$ thus every pattern in $\Pi_{f(|w|)}$ is either inconsistent or represents a forbidden pattern, and therefore $T$ must accept every pattern of $\Pi_{f(|w|)}$. \end{proof}
		
		One may ask if it is possible to construct non-empty strongly aperiodic subshifts which satisfy stronger constrains, such a being of finite type, sofic or effectively closed. The previous result shows that our construction is in this sense optimal for recursively presented groups with undecidable word problem.
	
	\section{Realization of densities}
	\label{section.realization_densities}
	
	In this section we construct over any infinite and finitely generated group a non-empty subshift over $\{0,1\}$ with the property that the density of $1$'s over any F\o lner sequence converges to a fixed $\alpha \in [0,1]$. From this result we derive the existence of uniform density subshifts for infinite groups of subexponential growth for any finite set of generators. Furthermore, we show that said subshifts are always weakly aperiodic.
	
	\begin{definition}
		Let $F \subset G$ be a finite subset of a group and $x \in \{0,1\}^G$ be a configuration. We define the \emph{density of $1$'s in $F$ and $x$} as: $$\dens(1,x|_F) = \dens(1,F,x) = \frac{|\{g \in F \mid x_g = 1 \}|}{|F|}.$$
	\end{definition}
	
	Similarly if $P\in\ag^F$ is a pattern with support $F$, we denote by $\dens(1,P)$ the ratio $\frac{|\{g \in F \mid P_g = 1 \}|}{|F|}$.
	
	\begin{definition}
		Let $G$ be a finitely generated group and $S$ a finite set of generators. We say a $G$-subshift over $\{0,1\}$ has uniform density $\alpha \in [0,1]$ if for every configuration $x \in X$ and for every sequence $(g_n)_{n\in\NN}$ of elements in $G$, one has $\dens(1,B(g_n,n),x)\to \alpha$.
	\end{definition}
	
	In a way similar to the previous definition, we could say a configuration $x \in \{0,1\}^G$ has density $\alpha \in [0,1]$ for some sequence of subsets $(T_n)_{n \in \NN}$ if for each sequence of elements $(g_n)_{n \in \NN}$ we have that $\dens(1,g_nT_n,x) \to \alpha$. Nevertheless, contrary to the preceding section, Lov\'asz local lemma cannot directly be applied to prove the existence of \cor{such} configurations. If we define the forbidden sets to be $A_{n,g} = \{x \in \{0,1\}^G \mid |\dens(1,gT_n,x)- \alpha | > \delta_n\alpha \}$ for some sequence of error terms $\delta_n \to 0$ we obtain that the measure of this set can be bounded \cor[by]{} above using the Chernoff bounds by $2\exp(\delta_n^2\alpha|T_n|/3)$. For any function which bounds these values \cor[by]{} above, and after some elimination of exponents, we obtain that the left hand side of the inequality required by the local lemma depends on $\delta_n$ while the right hand side is constant. Therefore we tackle this problem with a different 
approach.
	
	Nevertheless, if the condition that the group is amenable is added, not only it is possible to obtain a result like the one defined in the previous paragraph, moreover, the density over every F\o lner sequence can be asked to converge to the same fixed $\alpha$.
	
	\cor[!! this paragraph has been moved !!]{}
	A group $G$ is called \define{amenable} if there exists a left-invariant finitely additive probability measure $\mu: \mathcal{P}(G) \to [0,1]$ on $G$. The amenability of a group has many equivalent definitions -- many of which can be found in~\cite{ceccherini-SilbersteinC09}. From a combinatorial point of view the F\o lner condition states that a group is amenable if and only if it admits a F\o lner net, that is, a net $F_{\alpha}$ of non-empty finite subsets of $G$ such that $\forall g \in G$: $$ \lim_{\alpha}{ \frac{|gF_{\alpha} \triangle F_{\alpha}|}{|F_{\alpha}|}}= 0.$$
		
	Let $Int(F,K) := \{g \in F | \forall k \in K, gk \in F\}$ be the interior of $F$ with respect to $K$ and $\partial_K F := F \setminus Int(F,K)$ the boundary of $F$ with respect to $K$. In the case of countable groups the net can be just taken to be a sequence and thus amenability can be shown to be equivalent to the fact for every finite $K \subset G$ we have $\lim_{n \to \infty}{\frac{|\partial_K F_n|}{|F_n|}}=0$. That is to say, for any finite set $K$ the boundaries of the sets $F_n$ with respect to $K$ grow slower than themselves.
	
	\begin{theorem}
		\label{theorem.densities}
			Let $G$ be an infinite and finitely generated \cor{amenable} group and $\alpha \in [0,1]$. \cor[There is a non-empty subshift $X_{\alpha} \subset \{0,1\}^G$ such that for any $x \in X_{\alpha}$ and F\o lner sequence $(F_n)_{n \in \NN}$ then $\lim_{n \to \infty} \dens(1,F_n,x) = \alpha$]{Then there exists a non-empty subshift $X_{\alpha} \subset \{0,1\}^G$ with $\lim_{n \to \infty} \dens(1,F_n,x) = \alpha$ for any $x \in X_{\alpha}$ and any F\o lner sequence $(F_n)_{n \in \NN}$}.
	\end{theorem}
	
	Before proving this theorem we \cor[give a brief introduction to amenable groups in order to fix the notations and ]{} prove a useful property of metric spaces.

	\begin{definition}
		Let $(X,d)$ be a metric space. We say $F \subset X$ is \define{$r$-covering} if for each $x \in X$ there is $y \in F$ such that $d(x,y) \leq r$. We say $F$ is \define{$s$-separating} if for each $x \neq y \in F$ then $d(x,y) > s$.
	\end{definition}
	
	\begin{figure}[!ht]
		\centering
		\begin{tikzpicture}[scale=1]
 
%

\draw []
[l-system={cayleyPSL2Z, step=4pt, angle=30, axiom=[F+FFF[+FA]----FFF[+FA]----FFF]++++++A, order=4}]
lindenmayer system -- cycle;
\draw[fill=vert] (0.15,0) circle (0.05);
\draw[fill=vert] (-1.65,-0.63) circle (0.05);
\draw[fill=vert] (-1.65,0.63) circle (0.05);
\draw[fill=vert] (-2.42,0.63) circle (0.05);
\draw[fill=vert] (-2.5,1.3) circle (0.05);
\draw[fill=vert] (-2.42,-0.9) circle (0.05);
\draw[fill=vert] (-2.5,-0.23) circle (0.05);
\draw[fill=vert] (-1.6,1.6) circle (0.05);
\draw[fill=vert] (-1.1,-1.3) circle (0.05);
\draw[fill=vert] (-1.1,1.3) circle (0.05);
\draw[fill=vert] (-1.6,-1.8) circle (0.05);

\draw[fill=vert] (1.15,1.6) circle (0.05);
\draw[fill=vert] (0.3,2.22) circle (0.05);
\draw[fill=vert] (-0.22,1.8) circle (0.05);
\draw[fill=vert] (-0.22,2.62) circle (0.05);
\draw[fill=vert] (1,2.9) circle (0.05);
\draw[fill=vert] (1.62,2.9) circle (0.05);
\draw[fill=vert] (2.2,1.6) circle (0.05);
\draw[fill=vert] (2.25,2.25) circle (0.05);
\draw[fill=vert] (2.25,0.65) circle (0.05);
\draw[fill=vert] (2.72,1.97) circle (0.05);
\draw[fill=vert] (2.72,0.92) circle (0.05);

\draw[fill=vert] (1.15,-1.6) circle (0.05);
\draw[fill=vert] (0.3,-2.22) circle (0.05);
\draw[fill=vert] (-0.22,-1.8) circle (0.05);
\draw[fill=vert] (-0.22,-2.62) circle (0.05);
\draw[fill=vert] (1,-2.9) circle (0.05);
\draw[fill=vert] (1.62,-2.9) circle (0.05);
\draw[fill=vert] (2.2,-1.6) circle (0.05);
\draw[fill=vert] (2.25,-2.25) circle (0.05);
\draw[fill=vert] (2.25,-0.65) circle (0.05);
\draw[fill=vert] (2.72,-1.97) circle (0.05);
\draw[fill=vert] (2.72,-0.92) circle (0.05);

\end{tikzpicture}
		\caption{In green, an example of $2$-covering and $2$-separating set in $PSL(2,\ZZ) \cong \ZZ/2\ZZ \ast \ZZ/3 \ZZ$. Green vertices are at distance at least 3 from each other, and every vertex is at distance at most 2 from a green vertex.}
		\label{figure.hierarchical_decomposition_PSL2Z_bis}
	\end{figure}
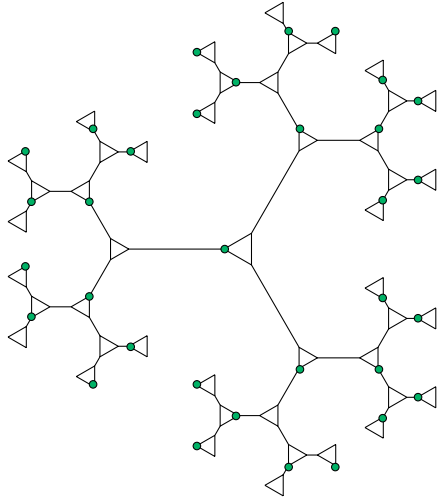
		
	\begin{lemma}\label{Lemmametricspacedecomposition}
		Let $(X,d)$ be a metric space and $r \in \NN$. There exists a set $F_r \subset X$ which is $r$-separating and $r$-covering. 
	\end{lemma}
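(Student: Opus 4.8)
The plan is to realize $F_r$ as a maximal $r$-separated subset of $X$ and then observe that maximality is precisely what forces the covering property. Since $X$ is countable, I would avoid Zorn's lemma entirely and build such a set by a greedy enumeration. Fix an enumeration $X = \{x_0, x_1, x_2, \dots\}$ and define an increasing chain of sets $F^{(i)}$ by $F^{(0)} = \emptyset$ and, at stage $i \geq 1$, setting $F^{(i)} = F^{(i-1)} \cup \{x_i\}$ whenever $d(x_i, y) > r$ for every $y \in F^{(i-1)}$, and $F^{(i)} = F^{(i-1)}$ otherwise. I would then take $F_r = \bigcup_{i \geq 0} F^{(i)}$.

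First I would check that $F_r$ is $r$-separating. Any two distinct points of $F_r$ can be written as $x_i, x_j$ with $i < j$, both of which were added to the chain at their respective stages; but $x_j$ was added only because $d(x_j, y) > r$ for all $y \in F^{(j-1)}$, and $x_i \in F^{(i)} \subseteq F^{(j-1)}$, so $d(x_i, x_j) > r$. By symmetry of $d$ this covers every pair, which is exactly the separation condition. Next I would check $r$-covering: given any $x_i \in X$, if $x_i$ was added then $x_i \in F_r$ and trivially $d(x_i,x_i) = 0 \leq r$; if $x_i$ was not added, this happened because some $y \in F^{(i-1)} \subseteq F_r$ already satisfied $d(x_i, y) \leq r$, so $x_i$ lies within distance $r$ of a point of $F_r$. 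In either case the covering condition holds.

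The conceptual content is simply that a maximal $r$-separated set is automatically $r$-covering, so the only delicate point is the existence of such a maximal set; here countability of $X$ makes the greedy construction completely explicit and removes any need for a choice principle. I therefore do not anticipate any genuine obstacle, only the bookkeeping of distinguishing the strict inequality $d > r$ used for separation from the non-strict $d \leq r$ used for covering.
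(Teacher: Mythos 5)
Your argument is correct and rests on the same conceptual pivot as the paper's proof: a maximal $r$-separated set is automatically $r$-covering, because any uncovered point could be adjoined without violating separation. Where you differ is in how the maximal set is produced. The paper argues abstractly, invoking compactness of $\{0,1\}^X$ in the countable case (and Zorn's lemma in general) to obtain a maximal $r$-separating set, and then observes separately that maximality implies covering. You instead build the set by a greedy pass through an enumeration of $X$ and verify both properties directly from the construction, which is more explicit, avoids any appeal to choice principles or compactness, and in fact yields a computable $F_r$ whenever the enumeration and the distance comparisons are computable --- a small bonus in a paper concerned with effectiveness, though nothing in the sequel uses it. The paper's abstract route has the advantage of working verbatim for arbitrary (not necessarily countable) metric spaces, which your enumeration cannot do; since the lemma is only stated for countable $X$, this costs you nothing here. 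One cosmetic slip: with $F^{(0)}=\emptyset$ and stages starting at $i\geq 1$, the point $x_0$ is never examined, so your covering argument does not apply to it; start the greedy pass at $i=0$ (or set $F^{(0)}=\{x_0\}$) and the proof is complete.
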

	
	\begin{proof}
		Suppose we have \cor[a]{an} $r$-separating set $F$ which is not $r$-covering. Then the set $K := \{x \in X \mid d(F,x)>r\}$ is not empty and $F$ can be extended by an element of $K$. Thus any maximal $r$-separating set is also $r$-covering.
		
		We only need to show that maximal $r$-separating sets exist. Let $(\mathcal{S},\subset)$ be the set of $r$-separating subsets of $X$ ordered by inclusion. Clearly $\emptyset \in \mathcal{S}$ and given a chain $\{A_i\}_{i \in I} \subset \mathcal{S}$ we have that $A = \bigcup_{i \in I} A_i$ is an upper-bound. Indeed, if $x,y \in A$ then $x \in A_i$ and $y \in A_j$ for some $i,j \in I$. As $\{A_i\}_{i \in I}$ is a chain, then either $A_i \subset A_j$ or $A_j \subset A_i$. As any of these two sets is $r$-separating we get that $d(x,y)>r$ and hence $A \in \mathcal{S}$. By Zorn's lemma there exists a maximal $r$-separating set. \cor[ This is false, the limit of $\chi$ is separated, but not necessarily maximal!!!, if $X = \mathbb{Q}$, then $B(x_0,n)$ is infinite, how do you pick your maximal? Another example: $(\NN,d)$ for $d(a,b) = 1$ except if $a=b$. A $2$-separated maximal set is any singleton, but the sequence $F_i = \{i\}$ converges to the empty set (which is separated but not maximal).The existence of a 
maximal $r$-separating set for the case of a countable metric space is given by the sequential compactness of $\{0,1\}^X$. Fix some element $x_0\in X$. Consider $F_n$ a maximal $r$-separating set of the ball $B(x_0,n)$. These finite sets can be enumerated as a sequence $(\chi_n)_{n\in\NN}$ in $\{0,1\}^X$, where $\chi_n\in\{0,1\}^X$ is the characteristic function of $F_n$. By compactness of $\{0,1\}^X$, there exists a subsequence $\left(F_{\phi(n)}\right)_{n\in\NN}$ that converges to some $y\in\{0,1\}^X$. By definition of the distance $d$, these characteristic functions correspond to finite sets $F_{\phi(n)}$ that coincide on larger balls $B(x_0,n)$ as $n$ goes to infinity. Denote $H$ the subset of $X$ with characteristic function $y$. Any two elements $x,y$ of $H$ belong to a finite set $F_n$ of the aforementioned sequence, thus if $H$ is not $s$-separating, this $F$ also fails to $r$-separates $X$, which contradicts the definition of $H$. Thus $H$ is $r$-separating. Again by contradiction, if $H$ were not 
maximal, there would exist an element $x\in X\setminus H$ such that $H\cup\{x\}$ is $r$-separating. Then $H\cap B(x_0,|x|)$ would not be maximal, which contradicts the definition of the $F_n$. As each finite subset $F \subset X$ can be seen as its characteristic function in $\{0,1\}^X$.]{}
	\end{proof}
	
	No\cor{w} we are ready to begin the proof of Theorem~\ref{theorem.densities}.
	
	\begin{proof}
		
		If $\alpha \in \{0,1\}$ the result is trivial. Let $\alpha \in (0,1)$, and define $K_n := B(1_G,5^n)$ and consider the subshift $X_{\alpha}$ given by the set of forbidden patterns $\FF$ such that for $P \in \{0,1\}^F$ (where $F \subset G$, $|F|<\infty$) belongs to $\FF$ if and only if the following condition is not satisfied:
		
		$$2n|\partial_{K_n}F| < |F| \implies |\dens(1,P)-\alpha| \leq \frac{1}{n}$$
		
		In other words, we forbid a pattern $P$ with support $F$ if the ratio $\frac{|\partial_{K_n}F|}{|F|}$ is sufficiently small and the density of ones in $P$ is further than $\frac{1}{n}$ from $\alpha$.
		
		Consider a F\o lner sequence $(F_n)_{n \in \NN}$ and let $m \in \NN$ and $x \in X_{\alpha}$. As $\lim_{n \to \infty}\frac{|\partial_{K_m}F_n|}{|F_n|} = 0$ there exists $N \in \NN$ such that $$\forall n \geq N \ \ \ \ \ \frac{|\partial_{K_m}F_n|}{|F_n|} < \frac{1}{2m}$$
		
		Therefore, for every $n \geq N$ we get that $|\dens(1,x|_{F_n})-\alpha| \leq \frac{1}{m}$. As $m$ can be made arbitrarily big we obtain that $\lim_{n \to \infty} \dens(1,F_n,x) = \alpha$.
		
		\bigskip
		
		We only need to show that $X_{\alpha} \neq \emptyset$. Our strategy will be to inductively construct an infinite covering forest of $G$, and then put a Sturmian word along an enumeration of the leaves of each of its trees. The configuration $x\in\{0,1\}^G$ obtained by this process will belong to $X_\alpha$. The following objects -- that are formally described below -- will be used to formalize this idea: a sequence $(A_n)_{n \in \NN} \subset 2^G$ of subsets of $G$, a sequence $(p_n)_{n \in \NN}: G \to A_n$ of functions and a sequence $(\Gamma_n)_{n \in \NN}$ of graphs on vertex sets $(A_n)_{n \in \NN}$ respectively. They are defined by the following recurrences, with base cases $A_0 = G$, $p_0 = id$ and $\Gamma_0 = \Gamma(G,S)$ where $S$ is a finite set of generators of $G$: 
		
		\label{inductive_structure_forest}
		
		\begin{enumerate}
			\item  The set $A_{n+1}$ is chosen as a $2$-separating and $2$-covering subset of $A_n$ for the distance induced by $\Gamma_n$. In particular, the sets $(A_n)_{n \in \NN}$ are nested.
			\item Suppose $p_n:G\to A_n$ is already defined, we first define $p_{n+1}$ on $A_{n}$ and then extend it to $G$. Consider an element $g\in A_n$. Since $A_{n+1}$ 2-covers $A_n$ in $\Gamma_n$, there are only three possible cases.
			\begin{itemize}
				\item $g\in A_{n+1}$: in this case we set $p_{n+1}(g)=g$.
				\item $d_{\Gamma_n}(g,A_{n+1})=1$: there exists a unique $h\in A_{n+1}$ that satisfies $d_{\Gamma_n}(g,h)=1$ -- uniqueness comes from the fact that $A_{n+1}$ is 2-separating -- and we set $p_{n+1}(g)=h$.
				\item $d_{\Gamma_n}(g,A_{n+1})=2$: we arbitrarily choose one $h\in A_{n+1}$ that realizes $d_{\Gamma_n}(g,h)=2$ and set $p_{n+1}(g)=h$.
			\end{itemize}
			For $g' \in G\setminus A_n$ we finally extend this function by $p_{n+1} := p_{n+1}\circ p_{n}$. 
			\item For $g \in A_n$ define the \define{$n$-cluster of $g$} by $\CC_n(g) := \{h \in G \mid p_{n}(h)=g \}$. The element $g\in A_n$ is called the \define{center} of the cluster $\CC_n(g)$. The graph $\Gamma_{n+1}$ has vertex set $A_{n+1}$, and there is an edge in $\Gamma_{n+1}$ between two elements $g,h \in A_{n+1}$ if and only if there exist $g'\in \CC_n(g)$ and $h'\in \CC_n(h)$ that are neighbors in $\Gamma(G,S)$.
		\end{enumerate}
		
		The \define{covering forest} defined by the sequence $(A_n,p_n,\Gamma_n)_{n \in \NN}$ is $(V,E)$, where the set of vertices  $V$ is the multiset $\bigsqcup_{n\in\NN} A_n$, and the edges are given by the parent functions: $(g,h)\in E$ if and only if $g\in A_n$, $h\in A_{n+1}$ and $p_n(g)=h$. In particular the successive applications of $p_1,\dots,p_n$ to an element $g\in G=A_0$ gives the path from the leaf labeled by $g$ to its height $n$ parent. The cluster $\CC_n(g)$ corresponds to the set of labels of descendants of the node labeled by $g$ that appears at height $n$ in the covering forest. The cluster $\CC_{n+1}(g)$ is obtained as the union of the cluster $\CC_n(g)$, all clusters $\CC_n(h)$ for $h\in A_n$ such that $d_{\Gamma_n}(g,h)=1$ and clusters $\CC_n(h')$ for $h'\in A_n$ such that $d_{\Gamma_n}(g,h')=2$ for which the parent function $p_{n+1}(h')$ has been chosen to be $g$ (see Figure~\ref{figure.covering_tree}). Remark that every cluster $\CC_n(g)$ is connected in $\Gamma$ as it is the finite union 
		of adjacent connected sets in $\Gamma$.

		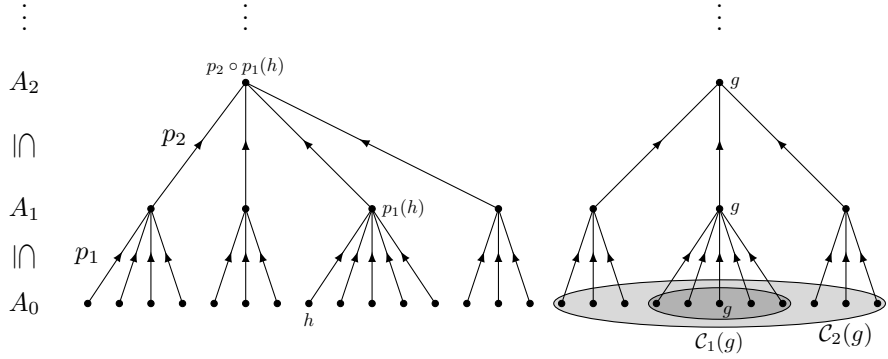
\begin{figure}[!ht]
			\centering
			\begin{tikzpicture}[scale=0.42]

\draw[fill=black!15] (21,0) ellipse (5.25 and 0.75);
\draw[fill=black!30] (21,0) ellipse (2.25 and 0.5);

\draw (21.25,-0.25) node{\scalebox{0.7}{$g$}};
\draw (21.5,3) node{\scalebox{0.7}{$g$}};
\draw (21.5,7) node{\scalebox{0.7}{$g$}};

\draw (8,-0.5) node{\scalebox{0.7}{$h$}};
\draw (11,3) node{\scalebox{0.7}{$p_1(h)$}};
\draw (6,7.5) node{\scalebox{0.7}{$p_2\circ p_1(h)$}};

\draw (21,-1.25) node{\scalebox{0.8}{$\CC_{1}(g)$}};
\draw (25,-1) node{\scalebox{0.9}{$\CC_{2}(g)$}};

\foreach \x in {1,...,26}{
  \draw[fill=black] (\x,0) circle (0.1);
}
\draw (-1,0) node{$A_0$};
\draw (1,1.5) node{$p_1$};

\foreach \x in {3,6,10,14,17,21,25}{
  \draw[fill=black] (\x,3) circle (0.1);
}
\draw (-1,1.5) node[yscale=1.5,rotate=-90]{$\subseteq$};
\draw (-1,3) node{$A_1$};
\draw (3.75,5.25) node{$p_2$};

\foreach \x in {1,...,4}{
  \draw[thin,postaction={decorate,decoration={markings,
    mark=at position .55 with {\arrow[scale=1]{latex}}}}] (\x,0) -> (3,3);
} 
\foreach \x in {5,...,7}{
  \draw[thin,postaction={decorate,decoration={markings,
    mark=at position .55 with {\arrow[scale=1]{latex}}}}] (\x,0) -> (6,3);
}  
\foreach \x in {8,...,12}{
  \draw[thin,postaction={decorate,decoration={markings,
    mark=at position .55 with {\arrow[scale=1]{latex}}}}] (\x,0) -> (10,3);
} 
\foreach \x in {13,...,15}{
  \draw[thin,postaction={decorate,decoration={markings,
    mark=at position .55 with {\arrow[scale=1]{latex}}}}] (\x,0) -> (14,3);
}  
\foreach \x in {16,...,18}{
  \draw[thin,postaction={decorate,decoration={markings,
    mark=at position .55 with {\arrow[scale=1]{latex}}}}] (\x,0) -> (17,3);
} 
\foreach \x in {19,...,23}{
  \draw[thin,postaction={decorate,decoration={markings,
    mark=at position .55 with {\arrow[scale=1]{latex}}}}] (\x,0) -> (21,3);
} 
\foreach \x in {24,...,26}{
  \draw[thin,postaction={decorate,decoration={markings,
    mark=at position .55 with {\arrow[scale=1]{latex}}}}] (\x,0) -> (25,3);
} 

\foreach \x in {6,21}{
  \draw[fill=black] (\x,7) circle (0.1);
}
\draw (-1,5) node[yscale=1.5,rotate=-90]{$\subseteq$};
\draw (-1,7) node{$A_2$};

\foreach \x in {3,6,10,14}{
  \draw[thin,postaction={decorate,decoration={markings,
    mark=at position .55 with {\arrow[scale=1]{latex}}}}] (\x,3) -> (6,7);
} 
\foreach \x in {17,21,25}{
  \draw[thin,postaction={decorate,decoration={markings,
    mark=at position .55 with {\arrow[scale=1]{latex}}}}] (\x,3) -> (21,7);
} 

\draw (-1,9) node[rotate=-90]{$\dots$};
\draw (6,9) node[rotate=-90]{$\dots$};
\draw (21,9) node[rotate=-90]{$\dots$};

\end{tikzpicture}
			\caption{A covering forest of $G$. In the left section of the image the edge structure is emphasized by writing explicitly the parent functions. In the right section we remark the cluster structure for $g \in A_2$.}
			\label{figure.covering_tree}
		\end{figure}
		
		Note that definition 3 above is equivalent to what follows: for $g,h \in A_{n+1}$ then the edge $(g,h)$ is in $E(\Gamma_{n+1}) $ if and only if there exists a path $g_1=g,g_2, \dots ,g_m=h$ from $g$ to $h$ in $\Gamma(G,S)$ such that for every $i \in \{1,\dots,m\}$ we have $p_{n+1}(g_i) \in \{g,h\}$.
		
		\begin{claim}\label{claimbolas}
			Let $g \in A_n$, then $B(g,n) \subset \CC_n(g) \subset B(g,\frac{1}{2}(5^n-1))$.
		\end{claim}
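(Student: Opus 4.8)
The plan is to prove both inclusions by induction on $n$, the base case $n=0$ being immediate since $\CC_0(g)=\{g\}=B(g,0)$ and $\tfrac12(5^0-1)=0$. I write $r_n:=\tfrac12(5^n-1)$ and record the recurrence $r_{n+1}=5r_n+2$. The engine of the proof is the cluster decomposition $\CC_{n+1}(g)=\bigcup_{k\in A_n,\,p_{n+1}(k)=g}\CC_n(k)$, in which every absorbed $k$ satisfies $d_{\Gamma_n}(g,k)\le 2$, together with the following \emph{absorption lemma}: if $g\in A_{n+1}$ and $k\in A_n$ with $d_{\Gamma_n}(g,k)\le 1$, then $p_{n+1}(k)=g$, so $\CC_n(k)\subseteq\CC_{n+1}(g)$. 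Indeed, since $A_{n+1}$ is $2$-separating in $\Gamma_n$, two distinct centers lie at $\Gamma_n$-distance at least $3$; hence a vertex $k$ at distance $1$ from $g\in A_{n+1}$ is not itself a center other than $g$ and has $g$ as its unique nearest element of $A_{n+1}$, so the parent rule forces $p_{n+1}(k)=g$. In particular $\CC_{n+1}(g)\supseteq\CC_n(g)\cup\bigcup_{d_{\Gamma_n}(g,k)=1}\CC_n(k)$.

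For the lower bound, take $g\in A_{n+1}$ and $h$ with $d(g,h)\le n+1$. If $d(g,h)\le n$ then $h\in\CC_n(g)\subseteq\CC_{n+1}(g)$ by the induction hypothesis and absorption, so assume $d(g,h)=n+1$ and let $h'$ be the penultimate vertex of a geodesic from $g$ to $h$, so that $d(g,h')=n$ and $h'$ is a neighbor of $h$ in $\Gamma(G,S)$. By the induction hypothesis $p_n(h')=g$. Set $k:=p_n(h)$. Either $k=g$, and we are done, or $k\neq g$, in which case I concatenate a $\Gamma(G,S)$-path from $g$ to $h'$ inside $\CC_n(g)$ (clusters are connected in $\Gamma(G,S)$), the edge $h'h$, and a $\Gamma(G,S)$-path from $h$ to $k$ inside $\CC_n(k)$; every vertex of the resulting path has $p_n$-image in $\{g,k\}$. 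By the path characterization of the edges of $\Gamma_n$ this gives $d_{\Gamma_n}(g,k)=1$, whence $\CC_n(k)\subseteq\CC_{n+1}(g)$ by absorption and $h\in\CC_{n+1}(g)$, as desired.

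For the upper bound, I first bound the word length of $\Gamma_n$-edges. If $a,b\in A_n$ are $\Gamma_n$-adjacent, then by definition there are neighbors $a'\in\CC_{n-1}(a)$ and $b'\in\CC_{n-1}(b)$ in $\Gamma(G,S)$; since $\CC_{n-1}(a)\subseteq\CC_n(a)$ and $\CC_{n-1}(b)\subseteq\CC_n(b)$, the induction hypothesis gives $d(a,b)\le d(a,a')+1+d(b',b)\le 2r_n+1$ (with the edges of $\Gamma_0$ having length $1$). Any $k$ absorbed into $\CC_{n+1}(g)$ satisfies $d_{\Gamma_n}(g,k)\le 2$, hence $d(g,k)\le 2(2r_n+1)=4r_n+2$, and any point of $\CC_n(k)$ lies within $r_n$ of $k$. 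Therefore $\CC_{n+1}(g)\subseteq B\big(g,\,r_n+4r_n+2\big)=B(g,5r_n+2)=B(g,r_{n+1})$, which closes the induction.

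I expect the delicate point to be the lower bound, specifically the step converting adjacency in $\Gamma(G,S)$ of two points lying in distinct level-$n$ clusters into $\Gamma_n$-adjacency of their centers. A naive appeal to the definition of $\Gamma_n$ fails, because that definition routes through the \emph{smaller} clusters $\CC_{n-1}$ rather than $\CC_n$; the remedy is precisely the detour through the centers, using connectivity of each cluster in $\Gamma(G,S)$ together with the equivalent description of an edge of $\Gamma_n$ as a $\Gamma(G,S)$-path all of whose vertices have parent in $\{g,k\}$. Everything ultimately rests on the $2$-separating and $2$-covering properties of $A_{n+1}$ in $\Gamma_n$: separation supplies the uniqueness behind the absorption lemma, while covering guarantees $d_{\Gamma_n}(g,k)\le 2$ for absorbed clusters and thereby controls the upper bound.
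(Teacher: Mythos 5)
Your proof is correct and follows essentially the same route as the paper: induction on $n$, with the $2$-separating property of $A_{n+1}$ in $\Gamma_n$ forcing whole neighbouring clusters to be absorbed (giving $B(g,n+1)\subset \CC_{n+1}(g)$), and the $2$-covering property together with the bound $r_{n+1}=5r_n+2$ on path lengths giving the upper inclusion. Your ``absorption lemma'' and the bound $2r_n+1$ on the $\Gamma(G,S)$-length of a $\Gamma_n$-edge are just explicit packagings of steps the paper carries out inline (the latter being exactly its five-subpaths-plus-two-edges count).
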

		\begin{proof}
			We prove the claim by induction. It stands true for $n = 0$.
			\begin{itemize}
				\item Consider $\CC_n(g)$. By induction hypothesis, $B(g,n-1) \subset \CC_{n-1}(g) \subset \CC_{n}(g)$. Let $h \in B(g,n)\setminus B(g,n-1)$. Either $h \in C_{n-1}(g)$ and we are done, or $h \in C_{n-1}(g')$ for some $g'\in A_{n-1}$. Then necessarily $d_{\Gamma_{n-1}}(g,g')=1$, since $hs\in B(g,n-1) \subset \CC_{n-1}(g)$ for some $s\in S\cup S^{-1}$. Finally as $A_{n}$ is a $2$-separating subset of the vertices of $\Gamma_{n-1}$ we get that $\CC_{n-1}(g') \subset \CC_{n}(g)$ thus $h \in \CC_{n}(g)$. We conclude that $B(g,n) \subset \CC_n(g)$. Note that the same argument proves that $\CC_{n}(g')\cdot(S\cup S^{-1})\subset \CC_{n+1}(g')$ for every $n\in\NN$ and $g' \in A_{n+1}$.
				
				\item Suppose inductively that for every $g \in A_{n-1}$ the inclusion $\CC_{n-1}(g) \subset B(g,\frac{1}{2}(5^{n-1}-1))$ holds. Fix one $g\in A_n$ and consider an element $h$ in the cluster $\CC_{n}(g)$. We show that $d_G(h,g)\leq \frac{1}{2}(5^{n}-1)$ by constructing a path of length at most $\frac{1}{2}(5^{n}-1)$ from $h$ to $g$. By definition of the cluster $\CC_n(g)$, we know that the element $h'\in A_{n-1}$ such that $h\in\CC_{n-1}(h')$ satisfies $d_{\Gamma_{n-1}}(g,h')\leq2$. In the sequel we will only consider the case where this distance is exactly $2$ as it is the worst case. Thus we assume that there exists a path $h'\to h''\to g$ of length $2$ between this $h'$ and $g$ in $\Gamma_{n-1}$. By definition of the graph $\Gamma_{n-1}$, this implies the existence of $k'\in\CC_{n-1}(h')$ and $k''\in\CC_{n-1}(h'')$ that are neighbors in $\Gamma(G,S)$ and $\ell''\in\CC_{n-1}(h'')$ and $\ell\in\CC_{n-1}(g)$ that are neighbors in $\Gamma(G,S)$. Putting everything together, we can build the following path 
in	$\Gamma(G,S)$ (see Figure~\ref{figure.upper_bound_cluster}): $$ h \to \dots \to h'\to \dots \to k' \to k'' \to \dots \to h''\to \dots \to \ell''\to \ell \to \dots \to g. $$
				
				\begin{figure}[!ht]
					\centering
					\begin{tikzpicture}[scale=1]

\draw (-1.5,0.75) node[above right]{$h$};
\draw[fill=black] (-1.5,0.75) circle (0.05);
 
\draw[dashed] (-1.5,0.75) -- (-1.25,0.5) -- (-1,0.75) -- (-0.5,0.5) -- (0,0); 
 
\draw[thick] (0,0) circle (2); 
\draw (0,0) node[below right]{$h'$};
\draw[fill=black] (0,0) circle (0.05);

\draw (0,-2.5) node[]{$\CC_{n-1}(h')$};

\draw[dashed] (0,0) -- (0.5,0.5) -- (1,0.25) -- (1.25,-0.25) -- (1.75,0);

\draw (1.75,0) node[above left]{$k'$};
\draw[fill=black] (1.75,0) circle (0.05);
\draw[] (1.75,0) -- (2.25,0);
\draw (2.25,0) node[above right]{$k''$};
\draw[fill=black] (2.25,0) circle (0.05);

\draw[dashed] (2.25,0) -- (2.5,-0.25) -- (3,-0.75) -- (3.25,-0.25) -- (3.5,0);

\draw[thick] (3.5,0) circle (1.5); 
\draw (3.5,0) node[below right]{$h''$};
\draw[fill=black] (3.5,0) circle (0.05);

\draw (3.5,-2) node[]{$\CC_{n-1}(h'')$};

\draw[dashed] (3.5,0) -- (3.75,0.5) -- (4,0.25) -- (4.5,-0.25) -- (4.75,0);

\begin{scope}[shift={(3,0)}]
\draw (1.75,0) node[above left]{$\ell''$};
\draw[fill=black] (1.75,0) circle (0.05);
\draw[] (1.75,0) -- (2.25,0);
\draw (2.25,0) node[above right]{$\ell$};
\draw[fill=black] (2.25,0) circle (0.05);
\end{scope}

\draw[dashed] (5.25,0) -- (5.5,-0.5) -- (6,0.25) -- (6.25,-0.25) -- (6.75,0);

\draw[thick] (6.75,0) circle (1.75); 
\draw (6.75,0) node[below right]{$g$};
\draw[fill=black] (6.75,0) circle (0.05);

\draw (6.75,-2.25) node[]{$\CC_{n-1}(g)$};
 
\end{tikzpicture}
					\caption{A path from an element $h$ of $\CC_n(g)$ to $g$ which inductively proves the inclusion $\CC_{n}(g) \subset B(g,\frac{1}{2}(5^{n}-1))$.}
					\label{figure.upper_bound_cluster}
				\end{figure}
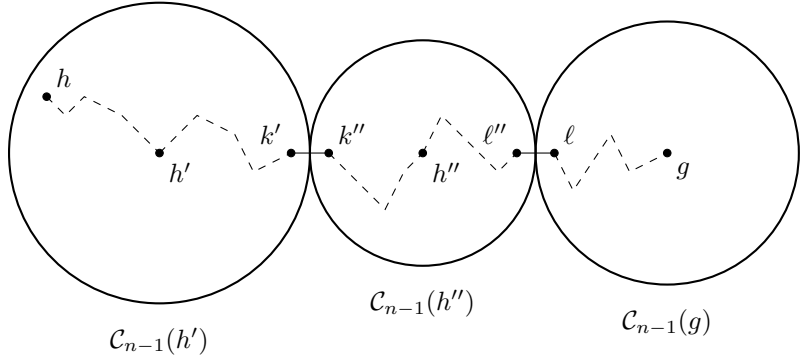

				Since they all link an element of a cluster of level $n-1$ to the center of this cluster, the induction hypothesis implies that we can choose the five subpaths $h \to \dots \to h'$, $h'\to \dots k'$, $k'' \to \dots \to h''$, $h''\to \dots \to \ell''$ and $\ell \to \dots \to g$ of length at most $\frac{1}{2}(5^{n-1}-1)$. Thus the total length of the path is at most $5\cdot\frac{1}{2}(5^{n-1}-1)+2\leq\frac{1}{2}(5^{n}-1)$. Therefore $\CC_{n}(g) \subset B(g,\frac{1}{2}(5^{n}-1))$.
				
			\end{itemize}
		\end{proof}
		
		Let $x \in \{0,1\}^G$ be a configuration such that for every $n \in \NN$ and $g \in A_n$
		
		\begin{align}
			\lfloor \alpha|\CC_{n}(g)| \rfloor \leq |\{h \in \CC_n(g) \mid x_h = 1\}| \leq \lfloor \alpha|\CC_{n}(g)| \rfloor +1 
			\label{condition.good_alpha}.
		\end{align}
		
		\begin{claim}\label{claim.exists_config_condition}
			There exists a configuration $x$ that satisfies condition~(\ref{condition.good_alpha}).
		\end{claim}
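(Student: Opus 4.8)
The plan is to exploit the hierarchical structure of the clusters. Since $p_{n+1}=p_{n+1}\circ p_{n}$, each cluster decomposes as a disjoint union $\CC_{n+1}(g)=\bigsqcup_{h\in A_n,\,p_{n+1}(h)=g}\CC_n(h)$ of the clusters one level below, and by Claim~\ref{claimbolas} every cluster is finite. Consequently the family $\{\CC_n(g)\}_{n\in\NN,\,g\in A_n}$ is laminar (any two clusters are nested or disjoint) and organizes the leaves $A_0=G$ into the covering forest. I would realize condition~(\ref{condition.good_alpha}) by placing a mechanical (Sturmian) word of slope $\alpha$ along a suitable enumeration of the leaves of each tree, the point being that such a word is balanced on \emph{every} window simultaneously, and condition~(\ref{condition.good_alpha}) is precisely a balancedness requirement on the windows that happen to be clusters.

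First I would linearly order the leaves of each tree so that every cluster becomes an interval. Fix a basepoint $b$ and let $c_n=p_n(b)\in A_n$ be its center, so that the clusters $\CC_0(c_0)\subset\CC_1(c_1)\subset\cdots$ are nested and finite, and $\bigcup_n\CC_n(c_n)$ is exactly the leaf set of the tree containing $b$ (two leaves lie in a common tree iff their center-chains merge, i.e. iff they eventually share a cluster). I then build compatible bijections $\phi_n:\CC_n(c_n)\to I_n$ onto intervals of $\ZZ$ by induction: given $\phi_n$, write $\CC_{n+1}(c_{n+1})$ as the disjoint union of $\CC_n(c_n)$ with the finitely many sibling clusters $\CC_n(h_1),\dots,\CC_n(h_j)$, order each sibling internally so that all of its sub-clusters are intervals, and concatenate these blocks, placing some to the left and some to the right of the block $\CC_n(c_n)$. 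This keeps $\phi_{n+1}$ equal to $\phi_n$ up to a translation, makes $\CC_{n+1}(c_{n+1})$ an interval with each child cluster a sub-interval, and preserves the interval property of all deeper clusters. The direct limit gives a bijection $\phi$ from the tree onto an interval of $\ZZ$ under which every cluster is an interval.

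Next I would set $x_g=\lfloor(\phi(g)+1)\alpha\rfloor-\lfloor\phi(g)\alpha\rfloor$ on each tree (using that tree's $\phi$), which defines a configuration $x\in\{0,1\}^G$. For a cluster mapped by $\phi$ to the interval $[a,a+L)$, the number of $1$'s telescopes to $\lfloor(a+L)\alpha\rfloor-\lfloor a\alpha\rfloor$, and the elementary identity $\lfloor X+Y\rfloor-\lfloor X\rfloor-\lfloor Y\rfloor\in\{0,1\}$ applied with $X=a\alpha$ and $Y=L\alpha$ shows that this count lies in $\{\lfloor\alpha L\rfloor,\lfloor\alpha L\rfloor+1\}$. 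Since $L=|\CC_n(g)|$, this is exactly condition~(\ref{condition.good_alpha}).

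The main obstacle is the combinatorial bookkeeping of the ordering rather than any hard estimate: I must verify that the exhausting chain $\CC_n(c_n)$ truly covers its whole tree and that each tree is infinite, so that $\phi$ maps onto an infinite interval and the mechanical word is genuinely defined. The infiniteness follows from the hypothesis that $G$ is infinite, since its Cayley graph is connected and infinite, whence $|B(c_n,n)|\ge n+1$, and Claim~\ref{claimbolas} gives $|\CC_n(c_n)|\ge|B(c_n,n)|\to\infty$. Everything else—the disjoint-union decomposition, finiteness of clusters guaranteeing finitely many siblings at each step, and the Sturmian window estimate—is routine once the laminar structure is in hand.
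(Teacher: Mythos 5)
Your proposal is correct and follows essentially the same route as the paper: you build a convex enumeration of the leaves of each tree of the covering forest (so that every cluster becomes an interval) and lay a Sturmian word of slope $\alpha$ along it, with condition~(\ref{condition.good_alpha}) following from balancedness. Your version is simply more explicit, spelling out the inductive construction of the interval ordering and replacing the paper's appeal to ``Sturmian words are balanced'' with the telescoping computation for the mechanical word $x_g=\lfloor(\phi(g)+1)\alpha\rfloor-\lfloor\phi(g)\alpha\rfloor$.
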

		
		\begin{proof}
			Consider the covering forest given by some sequence $(A_n,p_n,\Gamma_n)_{n \in \NN}$ as specified above. For every component $C$ of this forest, take $\phi_C$ a convex enumeration of its leaves: if $g$ and $g'$ are two leaves of $C$ with the same parent of height $n$ for some $n\in\NN$ -- i.e. $p_n(g)=p_n(g')$ -- then the preimage $h$ of every integer between $\phi_C(g)$ and $\phi_C(g')$ satisfies that $p_n(h)=p_n(g)$. Such an enumeration always exists.
			
			Let $(w_k)_{k \in \NN}$ be a Sturmian word of slope $\alpha$. We can build a configuration $x$ by putting the Sturmian sequence $(w_k)_{k \in \NN}$ along the convex enumeration chosen for every component of the forest. Since Sturmian words are balanced, we deduce that the configuration $x$ satisfies condition~(\ref{condition.good_alpha}). \end{proof}
		
		\begin{claim}\label{claim.condition_implies_alpha}
			If a configuration $x\in\{0,1\}^G$ satisfies condition~(\ref{condition.good_alpha}), then $x$ belongs to $X_{\alpha}$.
		\end{claim}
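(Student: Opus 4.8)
The goal is to verify that any $x$ satisfying~(\ref{condition.good_alpha}) avoids every forbidden pattern of $\FF$; equivalently, that for every finite $F\subset G$ and every $n\ge1$ with $2n|\partial_{K_n}F|<|F|$ one has $|dens(1,x|_F)-\alpha|\le\frac1n$. The plan is to exploit that the level-$n$ clusters $\{\CC_n(g)\}_{g\in A_n}$ form a partition of $G$ (they are the fibers of $p_n$), on each of which~(\ref{condition.good_alpha}) pins the number of $1$'s to within $1$ of $\alpha|\CC_n(g)|$, together with the geometric sandwich $B(g,n)\subset\CC_n(g)\subset B(g,\tfrac12(5^n-1))$ given by Claim~\ref{claimbolas}. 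So I fix such an $F$ and $n$ and estimate the count $N_1$ of $1$'s in $F$ cluster by cluster.

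First I would split the clusters meeting $F$ into those contained in $F$ (call them \emph{interior}) and those also meeting $G\setminus F$ (call them \emph{crossing}), giving
\[
F=\Big(\bigsqcup_{\CC_n(g)\subset F}\CC_n(g)\Big)\ \sqcup\ \Big(\bigsqcup_{\CC_n(g)\text{ crossing}}(\CC_n(g)\cap F)\Big).
\]
The key observation is that the part of $F$ lying in crossing clusters is confined to $\partial_{K_n}F$: if $\CC_n(g)$ is crossing, pick $h\in\CC_n(g)\cap F$ and $h''\in\CC_n(g)\setminus F$; both lie in $B(g,\tfrac12(5^n-1))$, so $d(h,h'')\le5^n-1$, whence $h^{-1}h''\in B(1_G,5^n)=K_n$ while $h\,(h^{-1}h'')=h''\notin F$. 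Thus $h\notin Int(F,K_n)$, i.e. $h\in\partial_{K_n}F$, and therefore $\sum_{\CC_n(g)\text{ crossing}}|\CC_n(g)\cap F|\le|\partial_{K_n}F|$.

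Next I would assemble the error. On each interior cluster~(\ref{condition.good_alpha}) gives $\big|\,|\{h\in\CC_n(g):x_h=1\}|-\alpha|\CC_n(g)|\,\big|\le1$, so summing over interior clusters the error is at most their number $N_{\mathrm{int}}$; and since $|\CC_n(g)|\ge|B(g,n)|=|B(1_G,n)|$ by left-translation homogeneity of the Cayley graph, $N_{\mathrm{int}}\le|F|/|B(1_G,n)|$. The crossing clusters contribute a count of $1$'s and a mass $\alpha\cdot(\text{crossing part})$ that both lie in $[0,|\partial_{K_n}F|]$, so their net contribution to $N_1-\alpha|F|$ is bounded by $|\partial_{K_n}F|$. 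Combining,
\[
\big|\,N_1-\alpha|F|\,\big|\ \le\ \frac{|F|}{|B(1_G,n)|}+|\partial_{K_n}F|.
\]

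The hard part will be the arithmetic of the constants. Dividing by $|F|$ and using the hypothesis $|\partial_{K_n}F|/|F|<\frac1{2n}$, it suffices to know that $|B(1_G,n)|\ge2n$. I would obtain this from a short geometric lemma: in an infinite finitely generated group every sphere $\partial B(1_G,k)$ with $k\ge1$ has at least two points. Indeed, the Cayley graph is vertex-transitive under left translation, so all spheres of a fixed radius are equinumerous; if some $\partial B(1_G,k)=\{h\}$, then (by infiniteness) choosing $u$ with $d(1_G,u)=2k$ and looking at a geodesic forces $1_G$ and $u$ to be two distinct points of $\partial B(h,k)$, contradicting $|\partial B(h,k)|=1$. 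Hence $|B(1_G,n)|\ge2n+1$, and the displayed bound yields $|dens(1,x|_F)-\alpha|<\tfrac1{2n}+\tfrac1{2n}=\tfrac1n$, as required. Every step other than this sphere-size estimate is a direct consequence of the cluster partition and Claim~\ref{claimbolas}.
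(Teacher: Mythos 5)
Your proof is correct and follows essentially the same route as the paper: decompose $F$ into the level-$n$ clusters fully contained in $F$ plus a remainder, show that remainder lies in $\partial_{K_n}F$ using the radius bounds of Claim~\ref{claimbolas}, bound the number of full clusters by $|F|/|B(1_G,n)|$ via the inclusion $B(g,n)\subset\CC_n(g)$, and finish with the hypothesis $2n|\partial_{K_n}F|<|F|$. The only notable difference is that you explicitly justify $|B(1_G,n)|\geq 2n$ with the sphere-size argument, a point the paper uses silently when it replaces $1/|B(1_G,n)|$ by $1/(2n)$; this is a welcome addition rather than a divergence.
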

		\begin{proof}
			Let $x$ be such a configuration and take some $n\in\NN$. Let $F$ be a set such that $2n|\partial_{K_n}F| < |F|$ -- remember that $K_n$ is $B(1_G,5^n)$ -- and consider the pattern $P := x|_{F}$. Let $V := Int(F,B(1_G,\frac{1}{2}(5^n-1))) \cap A_n$ and $R = F \setminus \bigcup_{v \in V}\CC_n(v)$. As $\bigcup_{v \in V}\CC_n(v) \subset  \bigcup_{v \in V}vB(1_G,\frac{1}{2}(5^n-1)) \subset F$ we get that:
			
			$$\frac{1}{|F|}\sum_{v \in V}(\lfloor \alpha|\CC_{n}(v)| \rfloor) \leq  \dens(1,P) \leq  \frac{1}{|F|}\sum_{v \in V}(\lfloor \alpha|\CC_{n}(v)| \rfloor +1) + \frac{|R|}{|F|}.$$
			
			Before working on those inequalities we remark two facts:
			
			\begin{enumerate}
				\item $R \subset \partial_{K_n}F$. Therefore $\frac{|R|}{|F|} < \frac{1}{2n}$.
				\item $ |V| \leq \frac{|F|}{|B(1_G,n)|}$.
			\end{enumerate}
			Indeed, let $r \in Int(F,K_n)$. That is, for all $g \in K_n$ then $rg \in F$. As $d(r,p_n(r)) \leq \frac{1}{2}(5^n-1)$ then $p_n(r) \in Int(F,B(1_G,5^n-\frac{1}{2}(5^n-1))) \subset Int(F,B(1_G,\frac{1}{2}(5^n-1)))$ therefore $p_n(r) \in V$. That means that $r \notin R$, therefore $R \subset F \setminus Int(F,K_n) = \partial_{K_n}F$. The second remark is a consequence of Claim~\ref{claimbolas}.	
			
			From the left side we get:\begin{align*}
				\dens(1,P)&  \geq  \frac{1}{|F|}\sum_{v \in V}(\lfloor \alpha|\CC_{n}(v)| \rfloor) \\
				& \geq \frac{\alpha}{|F|}\sum_{v \in V}|\CC_{n}(v)| - \frac{|V|}{|F|} \\
				& \geq \alpha\frac{|\bigcup_{v \in V}\CC_{n}(v)|}{|F|}- \frac{|F|}{|F||B(1_G,n)|} \\
				& \geq \alpha\frac{|F \setminus R|}{|F|}- \frac{1}{|B(1_G,n)|} \\
				& \geq \alpha(1-\frac{1}{2n}) - \frac{1}{2n} \\
				& \geq \alpha-\frac{1}{n}.\\
			\end{align*}
			
			While from the right side:
			\begin{align*}
				\dens(1,P)&   \leq \frac{1}{|F|}\sum_{v \in V}(\lfloor \alpha|\CC_{n}(v)| \rfloor +1) + \frac{|R|}{|F|} \\
				& \leq \frac{\alpha}{|F|}\sum_{v \in V}(|\CC_{n}(v)|)+ \frac{|V|}{|F|} + \frac{1}{2n} \\
				& \leq \alpha\frac{|\bigcup_{v \in V}\CC_{n}(v)|}{|F|}+ \frac{|F|}{|F||B(1_G,n)|} + \frac{1}{2n} \\
				& \leq \alpha + \frac{1}{n}. \\
			\end{align*}\end{proof}	
		Putting together Claims~\ref{claim.exists_config_condition},~\ref{claimbolas} and~\ref{claim.condition_implies_alpha}, we obtain that $X_\alpha\neq\emptyset$ which completes the proof of Theorem~\ref{theorem.densities}. \end{proof}
	
	\begin{remark*}
	In the case where $\alpha$ is a computable number, the subshift $X_{\alpha}$ given in the previous proof is $G$-effectively closed.
	\end{remark*}
	
	By noting that in the case of a group of subexponential growth, the sequence of balls always forms a F\o lner sequence~\cite[Theorem~6.11.2]{ceccherini-SilbersteinC09}, we obtain the following result.
	
	\begin{corollary}\label{corollary subexponential}
		Let $G$ be a group of subexponential growth, for every set of generators $S$ and $\alpha \in [0,1]$ there exists a non-empty $G$-subshift with uniform density.
	\end{corollary}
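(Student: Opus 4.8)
The plan is to take the subshift $X_\alpha$ produced by Theorem~\ref{theorem.densities} and verify directly that it has uniform density $\alpha$; the cases $\alpha\in\{0,1\}$ are handled by constant configurations, so assume $\alpha\in(0,1)$. The tempting shortcut -- to feed the sequence of balls straight into Theorem~\ref{theorem.densities} -- does not quite work, because uniform density quantifies over \emph{all} sequences of centers $(g_n)_{n\in\NN}$, and the sets $B(g_n,n)=g_nB(1_G,n)$ need not form a F\o lner sequence: left-translating by a fixed $h$ converts the invariance defect into one governed by the conjugate $g_n^{-1}hg_n$, whose length need not be bounded. I would therefore argue from the \emph{defining condition} of $X_\alpha$, which already packages a uniform estimate valid for every configuration and every support simultaneously.

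Concretely, if $x\in X_\alpha$ then no $x|_F$ is a forbidden pattern, so for every $m\in\NN$ with $2m\,|\partial_{K_m}F|<|F|$ one has $|dens(1,x|_F)-\alpha|\le \tfrac1m$, where $K_m=B(1_G,5^m)$. I apply this with $F=B(g_r,r)$. Since left translations are isometries of $\Gamma(G,S)$, one checks $Int(gF,K)=g\,Int(F,K)$, whence $|\partial_{K_m}B(g_r,r)|=|\partial_{K_m}B(1_G,r)|$ and $|B(g_r,r)|=|B(1_G,r)|$; thus the hypothesis $2m\,|\partial_{K_m}B(g_r,r)|<|B(g_r,r)|$ is independent of the center. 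Consequently the whole statement reduces to a single center-free claim: for each fixed $m$, the inequality $2m\,|\partial_{K_m}B(1_G,r)|<|B(1_G,r)|$ holds for all large $r$. Granting this, for every $m$ there is $R_m$ with $|dens(1,B(g_r,r),x)-\alpha|\le\tfrac1m$ whenever $r\ge R_m$, uniformly in $x$ and in $(g_r)_r$, and letting $m\to\infty$ gives $dens(1,B(g_r,r),x)\to\alpha$, i.e. uniform density $\alpha$.

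It remains to show that the balls $(B(1_G,r))_{r\in\NN}$ form a F\o lner sequence, which is exactly the reduced claim (taking $K$ to be any fixed $K_m$). Writing $\beta(r)=|B(1_G,r)|$ and letting $\rho$ be the radius of $K$, one has $\partial_KB(1_G,r)\subseteq B(1_G,r)\setminus B(1_G,r-\rho)$, so
\[
\frac{|\partial_KB(1_G,r)|}{\beta(r)}\ \le\ 1-\frac{\beta(r-\rho)}{\beta(r)},
\]
and everything comes down to proving $\beta(r+\rho)/\beta(r)\to1$ for each fixed $\rho$. This is where I expect the only genuine difficulty to lie. Subexponential growth means $\beta(r)^{1/r}\to1$; since $\beta$ is non-decreasing and submultiplicative, Fekete's lemma together with the identity $\prod_{k<r}\beta(k+1)/\beta(k)=\beta(r)$ yields immediately $\liminf_r \beta(r+1)/\beta(r)=1$, hence a F\o lner \emph{subsequence} of balls. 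The main obstacle is upgrading this $\liminf$ to a true limit -- which is precisely what uniform density demands, since it constrains balls of every radius rather than of a subsequence of radii. I would invoke here the standard fact that for groups of subexponential growth the full sequence of balls is a F\o lner sequence (the regularity of the growth function being the delicate point), after which the reduction above closes the argument and yields the desired non-empty $G$-subshift with uniform density $\alpha$.
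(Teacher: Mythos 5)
Your proposal is correct and follows essentially the same route as the paper, whose entire proof of this corollary is the one-sentence observation that in a group of subexponential growth the balls form a F\o lner sequence, after which Theorem~\ref{theorem.densities} applies. The one place where you go beyond the paper is the treatment of arbitrary centers: you rightly note that $(g_rB(1_G,r))_{r}$ need not be a F\o lner sequence in the left-translate sense, and you repair this by returning to the defining condition of $X_\alpha$ and using $Int(gF,K)=g\,Int(F,K)$, so that the hypothesis $2m|\partial_{K_m}F|<|F|$ is invariant under left translation of the support. This is a genuine (and needed) sharpening: the statement of Theorem~\ref{theorem.densities} alone does not literally cover moving centers, although the mechanism you use is exactly the one already at work inside the paper's proof of that theorem, where only the ratio $|\partial_{K_m}F|/|F|$ ever enters. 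Where you and the paper coincide is in the final, genuinely delicate step: both assert without proof that the \emph{full} sequence of balls $(B(1_G,r))_r$ is F\o lner, i.e.\ that $\beta(r+\rho)/\beta(r)\to 1$ for each fixed $\rho$. You are right to flag this as the crux --- subexponential growth plus submultiplicativity only yield $\liminf_r \beta(r+1)/\beta(r)=1$ and hence a F\o lner \emph{subsequence} of balls, and the upgrade to the full sequence is not a formal consequence of those two properties. Since the paper itself treats that upgrade as a known fact and gives no argument for it, your proposal matches the paper's proof both in substance and in precisely what it leaves to the reader.
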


	\begin{definition}
		Let $G$ be a group and $S$ a finite set of generators. The rate of convergence of a subshift $X$ with uniform density $\alpha$ is the function $$\theta_X(n) := \inf\{ k \in \NN \mid \sup_{g \in G, x \in X} |\dens(1,B(g,k),x)-\alpha| \leq \frac{1}{n}  \}.$$
	\end{definition}
	
	As the construction given in Theorem~\ref{theorem.densities} is explicit, we can give bounds for the rate of convergence of $X_{\alpha}$. Indeed, let $\gamma$ denote the growth of a group $G$, that is, $\gamma(k) = |B(1_G,k)|$ for a fixed set of generators $S$. Let $x \in X$, $g \in G$ and $B_k := B(g,k)$. By definition of $X_{\alpha}$, if $2n|\partial_{B(1_G,5^n)}B_k| < |B_k|$ then  $|\dens(1,B_k,x)-\alpha|<\frac{1}{n}$ for each $x \in X$ and $g \in G$.
	
	 As $\partial_{B(1_G,5^n)}B_k = B_k \setminus B_{k-2\cdot5^n}$ if $k \geq 2\cdot 5^n$, we obtain that: $$\theta_{X_{\alpha}}(n) = \inf\{ k \geq 2\cdot 5^n \mid 2n(\gamma(k)-\gamma(k-2\cdot 5^n)) < \gamma(k) \}.$$
	 
	 Therefore a lower bound is always $\theta_{X_{\alpha}}(n) \geq 2\cdot 5^n$. The upper bound depends on the growth rate of the group. For instance, if $G$ has polynomial growth then $\theta_{X_{\alpha}}(n) = O(n\cdot 5^n).$ Indeed, if $\gamma(k) = k^d$ for some $d \geq 1$ we can write: 
	 
	 \begin{align*}
	 2n(k^d-(k-2\cdot 5^n)^d) < k^d & \iff 1-( 1- \frac{2\cdot 5^n}{k} )^d < \frac{1}{2n}\\ & \iff ( 1- \frac{2\cdot 5^n}{k} )^d > 1-\frac{1}{2n}
	 \end{align*}
	 
	 By Bernoulli's inequality, $( 1- \frac{2\cdot 5^n}{k} )^d \geq 1 - \frac{2d 5^n}{k}$. Hence it suffices to choose $k > 4nd5^n$. This shows that $\theta_{X_{\alpha}}(n) = O(n\cdot 5^n)$. In the case of a group of subexponential growth, an upper bound can be computed with an analogous reasoning given the exact rate of growth $2^{k^{\beta}}$ for some $\beta \in (0,1)$.	

	Sturmian sequences are classical examples of aperiodic sequences~\cite{fogg2002substitutions}. As $X_{\alpha}$ shares this uniform density property which makes it similar to Sturmian sequences, it is a natural question to ask if it satisfies a form of aperiodicity.
	
	\begin{proposition}
		\label{proposition.X_alpha_weakly_aperiodic}
		Let $\alpha \in [0,1]\setminus \mathbb{Q}$. Then $X_{\alpha}$ is weakly aperiodic.
	\end{proposition}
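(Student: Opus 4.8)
The plan is to argue by contradiction, exploiting that weak aperiodicity is equivalent to every orbit being infinite. Since $|orb_\sigma(x)| = [G : stab_\sigma(x)]$, a failure of weak aperiodicity produces a configuration $x \in X_{\alpha}$ whose stabilizer $H := stab_\sigma(x)$ has finite index $m$ in $G$. I would then show that such an $x$ is forced to have a \emph{rational} limiting density of $1$'s, contradicting Theorem~\ref{theorem.densities}, which asserts this limit equals $\alpha \notin \mathbb{Q}$. I treat the substantive amenable case, for which Følner sequences — and hence the density witnesses of Theorem~\ref{theorem.densities} — exist.

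First I would unwind the stabilizer. From $(\sigma_g x)_h = x_{g^{-1}h}$ one gets that $g \in H$ if and only if $x_{gh} = x_h$ for every $h \in G$, so $x$ is constant on each right coset $Hg$ of $H$. Enumerating the $m$ right cosets as $C_1,\dots,C_m$ and writing $\epsilon_i \in \{0,1\}$ for the constant value of $x$ on $C_i$, the density of $1$'s on any finite $F \subset G$ is $\tfrac{1}{|F|}\sum_{i=1}^m \epsilon_i\,|F \cap C_i|$.

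The heart of the argument is an equidistribution statement: along any Følner sequence $(F_n)$ one has $|F_n \cap C_i|/|F_n| \to 1/m$ for each $i$. To prove it I would pass to the normal core $N := \bigcap_{g \in G} gHg^{-1}$, a finite-index normal subgroup contained in $H$ (it is the kernel of the action of $G$ on the $m$ cosets of $H$, hence of finite index). Let $\pi : G \to Q := G/N$ with $|Q| = M$, and let $\nu_n$ be the pushforward to $Q$ of the uniform measure on $F_n$, i.e. $\nu_n(q) = |F_n \cap \pi^{-1}(q)|/|F_n|$. The identity $|gF_n \cap \pi^{-1}(q)| = |F_n \cap \pi^{-1}(\pi(g)^{-1}q)|$ combined with the Følner estimate $|gF_n \triangle F_n| = o(|F_n|)$ shows that $\nu_n$ is asymptotically invariant under the translation action of $Q$; since the uniform measure is the only translation-invariant probability measure on the finite group $Q$, every limit point of $(\nu_n)$ is uniform, so $\nu_n(q) \to 1/M$. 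As each right coset $C_i$ of $H$ is a disjoint union of $[H:N]$ right cosets of the normal subgroup $N$ (for which right and left cosets coincide), summing gives $|F_n \cap C_i|/|F_n| \to [H:N]/M = 1/m$.

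Putting this together, $dens(1,F_n,x) \to \tfrac{1}{m}\,|\{\, i : \epsilon_i = 1 \,\}| \in \mathbb{Q}$, whereas Theorem~\ref{theorem.densities} forces this same limit to equal $\alpha \notin \mathbb{Q}$; this contradiction proves the proposition. I expect the main obstacle to be precisely the equidistribution step, and within it the mismatch between the (left) Følner condition and the right cosets on which $x$ is constant: passing to the normal core $N$, where left and right cosets agree, is what lets the averaging over the finite quotient $Q$ go through cleanly and yields the uniform limit.
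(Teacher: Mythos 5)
Your argument is correct, but it reaches the conclusion by a genuinely different route than the paper. You treat Theorem~\ref{theorem.densities} as a black box: a configuration $x$ with finite orbit is constant on the $m$ right cosets of its finite-index stabilizer $H$, and your equidistribution lemma --- pass to the normal core $N$, push the uniform measure on $F_n$ forward to the finite quotient $G/N$, and use asymptotic invariance under translation to force the uniform limit --- shows that any F\o lner sequence meets each coset with asymptotic frequency $1/m$, so the limiting density is rational, contradicting $\alpha\notin\mathbb{Q}$. (Your diagnosis of the left/right mismatch, and the normal core as the fix, is exactly right: left translation does not permute the right cosets of a non-normal $H$.) The paper's proof never invokes F\o lner sequences or the quotient group at this point: it works directly with the forbidden patterns defining $X_\alpha$, fixes a single finite set $F$ with $2m|\partial_{K_m}F|<|F|$, covers $Int(F,K_m)$ by translates $vD$ of a set $D$ of orbit representatives with $v$ ranging over stabilizer elements, and compares $dens(1,x|_F)$ with the rational number $\alpha'=dens(1,x|_D)$ via the same interior/boundary estimates used in Claim~\ref{claim.condition_implies_alpha}. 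Your approach is more modular and strictly more general --- it shows that \emph{any} subshift on an amenable group all of whose configurations have irrational density along F\o lner sequences is weakly aperiodic, independently of how $X_\alpha$ was built --- at the cost of the normal-core and limit-point machinery; the paper's is more elementary and self-contained, reusing estimates already in place. Both proofs implicitly restrict to amenable $G$ (the paper's choice of $F$ also appeals to amenability), which is the only case where the proposition has content.
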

	
	\begin{proof}
		Suppose there exist a configuration $x \in X_{\alpha}$ and an integer $n \in \NN$ such that $|Orb_{\sigma}(x)| = n$. Let $D := \{g_i\}_{1 \leq i \leq n} \subset G$ such that each $\sigma_{g_i}(x)$ represents a different element of $Orb_{\sigma}(x)$, with $g_1 = 1_G$. Consider also $\alpha' = \dens(1,x|_D) \in \mathbb{Q}$ and $N = \max_{1 \leq i \leq n}{d(1_G,g_i)}$.

		Let $m \in \NN$ such that $\frac{2}{m} < |\alpha - \alpha'|$ and $ 5^m > \frac{N}{2}$. Recall that $K_m := B(1_G,5^m)$ and consider a finite subset $F \subset G$ such that $2m|\partial_{K_m}F|<|F|$ -- by amenability of $G$ such a subset always exists. As $x \in X_{\alpha}$ we get that $|\dens(1,x|_F)-\alpha|<\frac{1}{m}$. Let $V = Int(F,B(1_G,N)) \cap stab_{\sigma}(x)$ and $R = F \setminus VD$. Note that by definition of $N$, $VD = \bigcup_{v \in V}vD \subset F$ and that as each $v \in stab_{\sigma}(x)$ then $\dens(1,x|_{VD}) = \dens(1,x|_D) = \alpha'$. We obtain: $$\dens(1,x|_D)\frac{|VD|}{|F|} \leq \dens(1,x|_F) \leq \dens(1,x|_D)\frac{|VD|}{|F|}+\frac{|R|}{|F|}$$
		
		Let $g \in Int(F,K_m)$. Since the configuration $x$ is supposed to have finite orbit $\{x,\sigma_{g_2}(x),\dots,\sigma_{g_n}(x)\}$, there exists $l \in \{1,\dots,n\}$ such that $\sigma_{g^{-1}}(x) = \sigma_{g_{l}}(x)$. Therefore $g_l^{-1}g^{-1} \in stab_{\sigma}(x)$ which is a subgroup, thus $gg_{l} \in stab_{\sigma}(x)$. As $d(g,gg_{l}) \leq N$ and $gg_{l} \in V$ then we conclude that $Int(F,K_m) \subset VD$ (because we have chosen $g_1=1_G$) and therefore $R \subset \partial_{K_m}F$.
		
		Similarly to the previous proof, we bound each side using this relation, obtaining:
		
		\begin{alignat*}{2}
			\alpha'\frac{|VD|}{|F|}  &\leq \dens(1,x|_F) \leq &\alpha'\frac{|VD|}{|F|}+\frac{|R|}{|F|} \\
			\alpha'\frac{|F \setminus R|}{|F|}  &\leq \dens(1,x|_F) \leq & \alpha'+\frac{|\partial_{K_m}F|}{|F|} \\
			\alpha'(1-\frac{1}{2m})  &\leq \dens(1,x|_F) \leq & \alpha'+\frac{1}{2m} \\
			\alpha' -\frac{1}{2m}  &\leq \dens(1,x|_F) \leq & \alpha'+\frac{1}{2m} 
		\end{alignat*}
		
		Therefore $|\dens(1,x|_F)-\alpha'|<\frac{1}{m}$ and $|\dens(1,x|_F)-\alpha|<\frac{1}{m}$ which implies that $|\alpha - \alpha'| < \frac{2}{m}$ contradicting the definition of $m$.	 
	\end{proof}
	
	In the case of $\ZZ^2$, the subshift $X_\alpha$ defined in the proof of Theorem~\ref{theorem.densities} is not strongly aperiodic, since it contains the following configurations with non-trivial stabilizer: take a bi-infinite Sturmian word and repeat it vertically so that a configuration $x\in\{0,1\}^{\ZZ^2}$ is defined. Then $x\in X_\alpha$ since no forbidden pattern defining $X_\alpha$ appears in $x$. Thus Proposition~\ref{proposition.X_alpha_weakly_aperiodic} is in some sense the best we can do for this particular construction.
	
	The statement of Theorem~\ref{theorem.densities} itself requires amenability for the group $G$ in order to be meaningful, since we want the density to converge to $\alpha$ for every F\o lner sequence. Therefore, it doesn't say anything about non-amenable groups. For free groups, we can build configurations (and therefore, subshifts) with uniform density by constructing a regular covering tree and putting a Sturmian sequence on every level of this tree. Nevertheless, we still don't know if this kind of construction is always possible. To our knowledge the following question remains open:
	\begin{question*}
		Let $G$ be an infinite group generated by a finite set $S$ and $\alpha\in[0,1]$. Does there exist a subshift $Y_\alpha\subset \{0,1\}^{G}$ with uniform density $\alpha$?
	\end{question*}

	\textbf{Acknoledgements}: This work was partially supported by the ANR project CoCoGro (ANR-16-CE40-0005).
	
	\bibliographystyle{plain}

\end{document}